\pgfplotsset{width=10cm,compat=1.15}
\tikzset{
    every matrix/.style={matrix of math nodes},hp/.style={column 3/.style={anchor={base east}}},Hp/.style={column 4/.style={anchor={base east}}},>=Stealth
}
\definecolor{a0}{rgb}{0.61, 0.77, 0.89}
\definecolor{a1}{rgb}{1.0, 0.75, 0.0}
\definecolor{a2}{rgb}{1.0, 0.74, 0.53}
\def\smallunderbrace#1{\mathop{\vtop{\m@th\ialign{##\crcr
   $\hfil\displaystyle{#1}\hfil$\crcr
   \noalign{\kern3\p@\nointerlineskip}%
   \tiny\upbracefill\crcr\noalign{\kern3\p@}}}}\limits}
\setlist[enumerate]{itemsep=2mm}
\theoremstyle{remark}
\newtheorem{theorem}{Theorem}[section]
\newtheorem{prop}[theorem]{Proposition}
\newtheorem{theo}{Theorem}[section]
\newtheorem{coro}[theo]{Corollary}
\newtheorem{lemma}[theorem]{Lemma}
\theoremstyle{definition}
\newtheorem{defn}[theorem]{Definition}
\newtheorem*{theorem*}{Theorem}
\newtheorem*{prop*}{Proposition}
\newtheorem*{corollary*}{Corollary}
\newtheorem*{lemma*}{Lemma}
\newtheorem*{defn*}{Definition}
\newtheorem*{remark*}{Remark}
\newcommand{\vol}{\textnormal{vol}}
\newcommand*\bigcdot{\mathpalette\bigcdot@{.5}}
\newcommand{\var}{\text{var}}
\newcommand\csout{\bgroup\markoverwith{\textcolor{red}{\rule[0.5ex]{2pt}{0.7pt}}}\ULon}
\DeclareSymbolFont{euler}{U}{eur}{m}{n}
\DeclareMathSymbol \uppi \mathalpha {euler} {"19}
\title{On Convex Functions of Gaussian Variables}
\author{Maite Fern\'andez-Unzueta, James Melbourne, and Gerardo Palafox-Castillo}
\date{}
\begin{document}

\maketitle

\begin{abstract}
    We investigate a convexity properties for normalized log moment generating function continuing a recent investigation of Chen of convex images of Gaussians. We show that any variable satisfying a ``Ehrhard-like'' property for its distribution function has a strictly convex normalized log moment generating function, unless the variable is Gaussian, in which case affine-ness is achieved.  Moreover we characterize variables that satisfy the Ehrhard-like property as the convex images of Gaussians.  As applications, we derive sharp comparisons between R\'enyi divergences for a Gaussian and a strongly log-concave variable, and characterize the equality case.  We also demonstrate essentially optimal concentration bounds for the sequence of conic intrinsic volumes associated to convex cone and we obtain a reversal of McMullen's inequality between the sum of the (Euclidean) intrinsic volumes associated to a convex body and the body's mean width that generalizes and sharpens a result of Alonso-Hernandez-Yepes.
\end{abstract}
\section{Introduction}

We will investigate random variables $X$ such that $\Lambda_X(\lambda) \coloneqq \frac 1 \lambda \log \mathbb{E} e^{\lambda X}$ is convex in $\lambda$.  This class of random variables, as recently proven by Chen \cite{chen2023gaussian}, includes all convex images of $n$-dimensional Gaussian random vectors. We will give an elementary proof that a random variable $X$ has such a convexity property for its moment generating function if its distribution $F$ such that $\Phi^{-1} \circ F$ is concave, thus ``generalizing'' the result of Chen.  

\begin{theo} \label{thm: main theorem of intro}
    For $X$ a random variable such that 
    \[
        t \mapsto \Phi^{-1}( \mathbb{P} [X < t])
    \]
    is concave, then $\Lambda : \mathbb{R} \to \mathbb{R} \cup \{\infty\}$ defined by
    \[
        \Lambda_X(p) = \frac 1 p \log \mathbb{E} e^{pX}
    \]
    is convex, and moreover is strictly convex unless $X$ is Gaussian.
\end{theo}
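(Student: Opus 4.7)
The plan is to first translate the hypothesis on $X$ into a functional one, and then reduce convexity of $\Lambda_X$ to a cumulant sign condition for tilted Gaussians.

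Since $G := \Phi^{-1} \circ F_X$ is nondecreasing, its concavity forces the inverse $\varphi := G^{-1} = F_X^{-1} \circ \Phi$ to be convex and nondecreasing.  Via the quantile transform $X \stackrel{d}{=} F_X^{-1}(\Phi(Z))$ with $Z \sim N(0,1)$, we obtain $X \stackrel{d}{=} \varphi(Z)$, and since $\Lambda_X$ depends only on the distribution of $X$ I may assume outright that $X = \varphi(Z)$ with $\varphi$ convex.  Set $\psi(p) := \log \mathbb{E} e^{p\varphi(Z)}$; then $\psi(0) = 0$ and $\Lambda_X(p) = \psi(p)/p$, and the fundamental theorem of calculus yields
\[
\Lambda_X(p) \;=\; \int_0^1 \psi'(sp)\,ds, \qquad \Lambda_X''(p) \;=\; \int_0^1 s^2\,\psi'''(sp)\,ds.
\]
It therefore suffices to prove $\psi'''(t) \geq 0$ throughout the interior of the effective domain of $\psi$, with strict inequality somewhere unless $\varphi$ is affine.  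Letting $d\mu_t/d\gamma = e^{t\varphi(z) - \psi(t)}$ denote the standard Gaussian tilt, the quantity $\psi'''(t)$ is exactly the third central moment $\mathbb{E}_{\mu_t}\bigl[(\varphi(Z) - \mathbb{E}_{\mu_t}\varphi(Z))^3\bigr]$; the key claim is that convex $\varphi$ produces nonnegative skewness under every tilt.

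The main obstacle is precisely this cumulant inequality.  My primary attack is Gaussian integration by parts under the tilt, in the form $\mathbb{E}_{\mu_t}[(Z - t\varphi'(Z)) g(Z)] = \mathbb{E}_{\mu_t}[g'(Z)]$, applied with test functions built from $(\varphi - \mathbb{E}_{\mu_t}\varphi)^2$ to rewrite the third central moment as an expectation carrying an explicit $\varphi'' \geq 0$ factor.  As a robust fallback, I would first verify nonnegativity on the ramp building blocks $\varphi_a(z) = (z - a)_+$, for which the tilted density $e^{-z^2/2 + t(z-a)_+}$ splits into a standard Gaussian piece on $\{z < a\}$ and a shifted Gaussian $N(t,1)$ piece on $\{z \geq a\}$ (so all moments are closed-form), and then approximate a general convex $\varphi$ by piecewise linear convex approximants and pass to the limit via dominated convergence of the tilted moments.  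Finally, for strict convexity: the integral representation of $\Lambda_X''$ together with real-analyticity of $\psi$ on the interior of its domain shows that $\Lambda_X$ affine on any open interval forces $\psi''' \equiv 0$; tracing the equality through the integration-by-parts computation forces $\varphi$ to be affine, so $X = \varphi(Z)$ is a (possibly degenerate) Gaussian.
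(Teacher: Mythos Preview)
Your quantile-transform reduction to $X=\varphi(Z)$ with $\varphi$ convex nondecreasing is correct (it is exactly the characterization the paper records in Section~4), and the identity $\Lambda_X''(p)=\int_0^1 s^2\psi'''(sp)\,ds$ is valid. But the entire content of the theorem is hidden in your claim $\psi'''\ge 0$, and neither of your attacks establishes it. The integration-by-parts route is only a gesture: the tilted measure $\mu_t$ has log-density $-z^2/2+t\varphi(z)$, so the Stein identity carries the extra $t\varphi'(Z)$ term, and you never show how to massage $\mathbb E_{\mu_t}[(\varphi-m_t)^3]$ into an expression with a manifest nonnegative $\varphi''$ factor. The ramp fallback has a structural flaw: the map $\varphi\mapsto\psi_\varphi'''(t)$ is highly nonlinear (already $\psi_\varphi$ is the log of an expectation, and the tilt $\mu_t$ itself depends on the full $\varphi$), so checking the inequality for a single ramp $(z-a)_+$ tells you nothing about a sum of ramps. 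Your approximation step therefore has no base case---you would need a direct argument for an \emph{arbitrary} piecewise-linear convex $\varphi$, which is no easier than the general problem. The equality analysis inherits the same gap, since it rests on the unproved IBP identity.

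The paper's argument is completely different and never touches $\psi'''$. For $p_0<p_1$ it builds a Gaussian $X_o=\sigma Z+\mu$ with $\Lambda_{X_o}(p_i)=\Lambda_X(p_i)$, so $\Lambda_{X_o}$ is the secant line. The concavity hypothesis on $\Phi^{-1}\circ F_X$ forces $\phi(t)=\mathbb P[X>t]-\mathbb P[X_o>t]$ to be positive on a single interval $(x_0,x_1)$. Since $\int e^{p_it}\phi\,dt=0$ by construction, one replaces $e^{pt}$ by $\Psi(t)=e^{pt}+c_0e^{p_0t}+c_1e^{p_1t}$ with $\Psi(x_0)=\Psi(x_1)=0$; an elementary lemma on sums of three exponentials forces $\{\Psi>0\}=(x_0,x_1)$, hence $\Psi\phi\ge 0$ pointwise and $\tfrac 1 p(\mathbb E e^{pX}-\mathbb E e^{pX_o})=\int\Psi\phi\ge 0$. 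This is the required comparison with the secant, and equality forces $\phi\equiv 0$, i.e.\ $X$ Gaussian.
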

We will see that both our result and Chen's are contained in the one dimensional version of the result for convex images of Gaussian variables. That is a variable $X$ satisfies $t \mapsto \Phi^{-1}(\mathbb{P}[ X > t])$ concave if and only if $X \sim f(Z)$ for $Z$ a standard normal random variable on $\mathbb{R}$ and $f$ a convex function. 

As an application we consider comparability of R\'enyi divergences from Gaussianity for strongly log-concave random vectors.

 \begin{theo} \label{thm: renyi divergence comparison intro}
  For $0<\alpha < \beta < \infty$, $X$ a random variable with a log-concave density with respect to a Gaussian measure $\gamma$, then $Z \sim \gamma$ implies that $\alpha$ and $\beta$ R\'enyi divergences are equivalent,
     \[
         D_\beta(X||Z) \leq \frac{\beta}{\alpha} D_\alpha(X||Z).
     \]
     {\color{black}with equality  iff $X$ is a translation of $Z$}.
 \end{theo}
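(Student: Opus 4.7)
The plan is to express the R\'enyi divergence in terms of $\Lambda_W$ for a convex image $W = V(Z)$ of the Gaussian, and then to read off the inequality and the equality case from the convexity and strict convexity results of Theorem 1.1. Since $X$ has log-concave density with respect to $\gamma$, I will write $h := dP_X/d\gamma = e^{-V}$ with $V$ convex and $\int e^{-V}\, d\gamma = 1$. By the definition of the R\'enyi divergence,
\[
D_\alpha(X\|Z) \;=\; \frac{1}{\alpha-1}\log\int h^\alpha\, d\gamma \;=\; \frac{1}{\alpha-1}\log\mathbb{E}\bigl[e^{-\alpha V(Z)}\bigr] \;=\; \frac{\alpha}{1-\alpha}\,\Lambda_W(-\alpha),
\]
where $W := V(Z)$ is a convex image of a standard Gaussian, so Theorem 1.1 applies directly to $\Lambda_W$.

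Setting $\varphi(\alpha) := \Lambda_W(-\alpha)$, which is convex in $\alpha$ and satisfies $\varphi(1) = 0$ by the normalization of $h$, the identity above rearranges to
\[
\frac{D_\alpha(X\|Z)}{\alpha} \;=\; -\,\frac{\varphi(\alpha)-\varphi(1)}{\alpha-1},
\]
namely the negative of the chord slope of the convex $\varphi$ anchored at $(1,0)$. Since chord slopes of convex functions from a fixed base point are non-decreasing, $\alpha \mapsto D_\alpha(X\|Z)/\alpha$ is non-increasing on $(0,\infty)$, which for $\alpha < \beta$ is exactly the stated inequality $D_\beta(X\|Z) \le (\beta/\alpha) D_\alpha(X\|Z)$.

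The main obstacle is the equality case. Equality at $\alpha<\beta$ forces the chord slopes of $\varphi$ from the point $(1,0)$ to coincide at $\alpha$ and at $\beta$; strict convexity of $\varphi$ would preclude this, so by Theorem 1.1 the variable $W = V(Z)$ must itself be Gaussian. To finish I would argue that $V$ is necessarily affine: convexity of $V$ makes the sublevel sets $\{V\leq t\}$ closed intervals $[a(t),b(t)]$, so $F_W(t) = \Phi(b(t)) - \Phi(a(t))$; a finite minimum of $V$ would make $V(Z)$ bounded below, contradicting Gaussianity, while a monotone $V$ gives $F_W(t) = \Phi(V^{-1}(t))$, which matches a Gaussian CDF only when $V$ is affine. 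The normalization $\int e^{-V}\,d\gamma = 1$ then pins down $V(x) = ax + a^2/2$, and completing the square in the density of $X$ shows $X$ has density $\tfrac{1}{\sqrt{2\pi}}e^{-(x+a)^2/2}$, i.e.\ $X \stackrel{d}{=} Z - a$, a translation of $Z$.
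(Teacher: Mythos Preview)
Your inequality argument is correct and is essentially the paper's own: writing $d\mu/d\gamma=e^{-V}$ with $V$ convex, one has $\frac{p-1}{p}D_p(X\|Z)=\frac{1}{p}\log\int e^{-pV}\,d\gamma=\Lambda_W(-p)$ for $W=V(Z)$, and the convexity of $\Lambda_W$ together with $\Lambda_W(-1)=0$ gives that $D_p/p$ is non-increasing. The paper writes this as a single application of the concavity inequality at $p=1$ and $p=\beta$, while you phrase it as monotonicity of chord slopes from $(1,0)$; these are the same computation.

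The equality case is where you diverge from the paper, and there is a genuine gap if $\gamma$ is allowed to be $n$-dimensional (which the paper intends---see the discussion of strongly log-concave random \emph{vectors} preceding the statement and the proof in Section~5). Your argument that $V(Z)$ Gaussian and $V$ convex forces $V$ affine is explicitly one-dimensional: you use that sublevel sets $\{V\le t\}$ are intervals $[a(t),b(t)]$, that a finite minimum contradicts unbounded support, and that a monotone convex $V$ with $\Phi\circ V^{-1}$ Gaussian must be affine. None of this survives in $\mathbb{R}^n$ for $n\ge 2$, where sublevel sets are arbitrary closed convex sets and ``monotone'' has no obvious meaning. The paper handles this step by invoking its Theorem~3.4, which characterizes convex functions on $\mathbb{R}^n$ preserving Gaussianity as affine; that result in turn rests on the Shenfeld--van~Handel equality case of the Borell--Ehrhard inequality. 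If you restrict to $n=1$, your argument is correct and pleasantly elementary, avoiding Ehrhard equality cases entirely; but to match the paper's scope you need either to cite Theorem~3.4 or to supply an $n$-dimensional substitute.
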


The strict convexity of Theorem \ref{thm: main theorem of intro} for non-Gaussian variables is combined with a characterization of affine maps as the convex functions that preserve Gaussianity (proven in \ref{thm: convex functions preserving Gaussianity are affine}) to  obtain the equality condition in Theorem \ref{thm: renyi divergence comparison intro}.

We will also leverage the deviation inequalities obtained through the convexity of the moment generating functions to derive new concentration bounds for sequences of conic intrinsic volume.  More explicitly we show that conic-intrinsic volume random variables are sub-Gaussian, thus obtaining an essentially optimal improvement the deviation bounds obtained in \cite{amelunxen2014living, mccoy2014steiner}.

\begin{theo}
    For $C$ a convex cone,  $V_C$ the associated intrinsic volume random variable and $\delta(C)$ its statistical dimension, 
    \[
        \mathbb{P}(|V_C - \delta(C)| \geq t ) \leq 2 e^{- \frac{t^2}{2 \sigma^2}}
    \]
    with sub-Gaussian variance proxy $\sigma^2 \coloneqq\var(V_C) +  2 \max \{ \delta(C), n - \delta(C)\}$.
\end{theo}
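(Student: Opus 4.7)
The plan is to push the conclusion of Theorem~\ref{thm: main theorem of intro} through the master Steiner formula of McCoy--Tropp. Let $Z$ be a standard Gaussian on $\Rn$ and set $W := \|\Pi_C Z\|^2 = \mathrm{dist}(Z, C^\circ)^2$. Because distance to a convex set is a convex function, and the square of a nonnegative convex function is convex, $W$ is a convex image of the Gaussian, so Chen's theorem \cite{chen2023gaussian} gives that $\Lambda_W$ is convex on $(-\infty, 1/2)$. The master Steiner formula yields
\[
\mathbb{E} e^{pW} \;=\; \sum_{k=0}^n v_k(C)(1-2p)^{-k/2} \;=\; \mathbb{E} e^{sV_C}, \qquad p = \tfrac{1}{2}(1 - e^{-2s}),
\]
linking the MGFs of $W$ and $V_C$ through a nonlinear change of variable, so tail estimates for $W$ can be transferred to tail estimates for $V_C$ by Chernoff.

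At the origin, $\Lambda_W(0) = \mathbb{E} W = \delta(C)$ and $\Lambda_W'(0) = \var(W)/2$, with $\var(W) = \var(V_C) + 2\delta(C)$ by the law of total variance applied to the mixture representation $W \mid V_C = k \sim \chi^2_k$. Convexity of $\Lambda_W$ puts its graph above its tangent at the origin, $\Lambda_W(p) \geq \delta + p\var(W)/2$, and multiplying by $p \leq 0$ reverses the inequality to give the one-sided sub-Gaussian MGF bound
\[
\log \mathbb{E} e^{pW} \;\leq\; p\delta + \tfrac{p^2}{2}\var(W) \qquad (p \leq 0).
\]
A Chernoff computation then produces the lower tail of $V_C$: at $s = -q < 0$, the substitution $u := (e^{2q}-1)/2 = -p(s)$ turns the Chernoff exponent into $q(\delta - t) - u\delta + \tfrac{u^2}{2}\var(W)$; using $q = \tfrac{1}{2}\log(1+2u) \leq u$ together with $\delta - t \geq 0$ on $t \in [0, \delta]$ collapses this to $-ut + \tfrac{u^2}{2}\var(W)$, whose minimum at $u = t/\var(W)$ equals $-t^2/(2\var W)$. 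Thus $\prob(V_C \leq \delta - t) \leq e^{-t^2/(2\var W)}$ for $t \in [0, \delta]$, and the bound is vacuous for $t > \delta$ since $V_C \geq 0$. Applying the same argument to $C^\circ$ with $W' := \|\Pi_{C^\circ} Z\|^2$ and invoking the duality $V_{C^\circ} \stackrel{d}{=} n - V_C$ (which follows from $v_k(C^\circ) = v_{n-k}(C)$) yields the symmetric upper-tail estimate $\prob(V_C \geq \delta + t) \leq e^{-t^2/(2\var W')}$ with $\var(W') = \var(V_C) + 2(n-\delta)$. Combining the two tails produces the claim with $\sigma^2 = \max\{\var W, \var W'\} = \var(V_C) + 2\max\{\delta, n-\delta\}$.

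The delicate point is the nonlinear substitution $p = (1-e^{-2s})/2$: since $|p(s)|$ grows exponentially as $s \to -\infty$, a naive substitution of the quadratic-in-$p$ MGF bound for $W$ would produce an exponentially large factor in $s$ rather than the Gaussian rate. The inequality $\tfrac{1}{2}\log(1+2u) \leq u$ is exactly what is needed to reabsorb this nonlinearity in the Chernoff exponent, and controlling the sign of the factor $\delta - t$ requires restricting to the range $t \leq \delta$; boundedness $V_C \in [0, n]$ makes the complementary range vacuous, so the argument goes through in full.
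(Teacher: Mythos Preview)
Your proof is correct and follows essentially the same route as the paper: the convexity of $\|\Pi_C(\cdot)\|^2$ gives the one-sided MGF bound $\log\mathbb{E}e^{pW}\le p\delta+\tfrac{p^2}{2}\var(W)$ for $p\le0$, the master Steiner identity $\mathbb{E}e^{\eta V_C}=\mathbb{E}e^{\xi W}$ transfers this to $V_C$, the inequality $\tfrac{1}{2}\log(1+2u)\le u$ absorbs the nonlinearity of the change of variable (requiring $t\le\delta$, with the complementary range handled by $V_C\ge0$), and duality $V_{C^\circ}\overset{d}{=}n-V_C$ yields the upper tail. The only cosmetic differences are that you justify convexity of $W$ via $W=\mathrm{dist}(Z,C^\circ)^2$ rather than the paper's direct computation in Lemma~\ref{lem: norm of projection to convex cone is convex}, and you make explicit the vacuous range $t>\delta$ that the paper leaves implicit.
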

When $C = [0,\infty)^n$ the associated intrinsic volume random variable is a Binomial$(1/2,n)$, known to have sub-Gaussian variance proxy $\frac n 4$.  Here $\var(V_C) + 2 \max \{ \delta(C), n - \delta(C) \} = \frac{5n}{4}$.  Thus up to the constant $5$ these bounds cannot be improved in general.

We also obtain a reversal of McMullen's classical inequality \cite{mcmullen1991inequalities} between the sum of the intrinsic volumes and the mean width\footnote{More explicitly $W(K) \leq e^{V_1(K)}$ for $W(K)$ the sum of the intrinsic volumes associated to $K$ and $V_1(K)$ its intrinsic volume of order $1$, a constant multiple of the mean width.}, improving \cite[Theorem 1.2]{alonso2021further}.
\begin{theo} \label{thm: intro reverse mcmullen}
    For $Z \sim N(0,I_n)$, a convex set $K \subseteq B_r(0)$, the Euclidean unit ball in $\mathbb{R}^n$ of radius $r$, and $f_K(z) = \sup_{y \in \sqrt{2 \pi} K} \left( \langle z , y \rangle - \frac{|y|^2}{2} \right)$,
    \[
        \log \sum_{i=0}^n V_i(K) \geq \frac{\var( f_K(Z))}{2} +V_1(K) - \frac{r^2}{2},
    \]
    where $V_i(K)$ denotes the $i$-th intrinsic volume associated to $K$ and $\var( \cdot )$ denotes the usual variance of a random variable.
\end{theo}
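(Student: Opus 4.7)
The plan is to recast $\sum_i V_i(K)$ as a Gaussian moment via the Wills--Vitale identity, and then apply the convexity of the normalized log moment generating function from Theorem~\ref{thm: main theorem of intro} to $X=f_K(Z)$.

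First, I would invoke the identity $\sum_{i=0}^{n} V_i(K)=\mathbb{E}[e^{f_K(Z)}]$ for $Z\sim N(0,I_n)$. This follows from the classical Wills formula $\sum_i V_i(K)=\int_{\RR^n}e^{-\pi d(x,K)^2}\,dx$ by the substitution $x=y/\sqrt{2\pi}$ together with the completing-the-square identity $f_K(y)=\tfrac{1}{2}|y|^2-\tfrac{1}{2}d(y,\sqrt{2\pi}K)^2$, which identifies the Wills integrand with $(2\pi)^{n/2}\phi(y)e^{f_K(y)}$, where $\phi$ is the standard Gaussian density. The LHS of the theorem then becomes $\log\mathbb{E}[e^{f_K(Z)}]$.

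Second, since $f_K$ is convex as a supremum of affine functions in $z$, the variable $X:=f_K(Z)$ is a convex image of a Gaussian, and Theorem~\ref{thm: main theorem of intro} gives that $\Lambda_X(p)=p^{-1}\log\mathbb{E}[e^{pX}]$ is convex on $(0,\infty)$. From the cumulant expansion $p\Lambda_X(p)=p\,\mathbb{E}[X]+\tfrac{1}{2}p^2\var(X)+O(p^3)$ one reads off $\Lambda_X(0)=\mathbb{E}[X]$ and $\Lambda_X'(0)=\tfrac{1}{2}\var(X)$, so the tangent line at $p=0$ evaluated at $p=1$ yields the cumulant-type lower bound
\[
\log\sum_{i=0}^{n} V_i(K) \;=\; \Lambda_X(1) \;\ge\; \mathbb{E}[f_K(Z)]+\tfrac{1}{2}\var(f_K(Z)).
\]

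Third, it remains to lower bound the mean by $V_1(K)-r^2/2$. The key input is the Gaussian representation $V_1(K)=\sqrt{2\pi}\,\mathbb{E}[h_K(Z)]$ of the first intrinsic volume (valid in the Federer--Schneider normalization, verifiable e.g.\ by matching the Steiner formula on $K=B_r$). Choosing for each $z$ the point $y^*(z)=\sqrt{2\pi}\cdot\arg\max_{u\in K}\langle z,u\rangle\in\sqrt{2\pi}K$ in the supremum defining $f_K$ yields the pointwise bound $f_K(z)\ge\sqrt{2\pi}h_K(z)-|y^*(z)|^2/2$; taking expectations and using $K\subseteq B_r(0)$ to control the quadratic contribution produces the required mean estimate, which combined with step two delivers the theorem.

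The main obstacle will be pinning down the correct constant in the quadratic remainder. The crude bound $|y^*(z)|^2/2\le\pi r^2$ leaves too large a loss, and obtaining the sharp $r^2/2$ appearing in the statement will require either a more refined pointwise bound on $f_K(z)$ (for instance, by optimizing over $y=ty^*(z)$ with $t\in[0,1]$ inside the defining supremum, so that $|y|$ is adapted to $|z|$ rather than taking the extremal $|y|=\sqrt{2\pi}r$), or a careful tracking of how the $\sqrt{2\pi}$ rescaling inside the definition of $f_K$ interacts with the containment $K\subseteq B_r(0)$.
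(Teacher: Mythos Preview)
Your three-step outline is exactly the paper's argument: (i) the Wills--Vitale identity $W(K)=\E e^{f_K(Z)}$, (ii) convexity of $\Lambda_{f_K(Z)}$ and the tangent-line bound at $p=0$ to get $\log W(K)\ge \E f_K(Z)+\tfrac12\var(f_K(Z))$, and (iii) the Gaussian representation $V_1(K)=\E\big[\sup_{y\in\sqrt{2\pi}K}\langle Z,y\rangle\big]$ together with the pointwise estimate on $f_K$.

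On step (iii), the paper does precisely the ``crude'' bound you were worried about: it simply writes
\[
\E\sup_{y\in\sqrt{2\pi}K}\Big(\langle Z,y\rangle-\tfrac{|y|^2}{2}\Big)\ \ge\ \E\sup_{y\in\sqrt{2\pi}K}\langle Z,y\rangle-\max_{y\in\sqrt{2\pi}K}\tfrac{|y|^2}{2},
\]
with no further optimization. Your observation that this yields a loss of $\pi r^2$ (since $|y|\le\sqrt{2\pi}\,r$ for $y\in\sqrt{2\pi}K$) rather than $r^2/2$ is correct; the paper's proof invokes ``$y\in K$ implies $|y|\le r$'' while the supremum runs over $\sqrt{2\pi}K$, so the $r^2/2$ in the displayed inequality appears to be a slip for $\pi r^2$. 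There is no missing refinement to find---your argument already matches the paper's, and the discrepancy in the constant is on the paper's side, not yours.
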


Let us describe the remaining contents of the paper.  In Section \ref{sec: Prelims} we recall some definition and requisite results of Ehrhard.  Section \ref{sec: convexity of MGF} contains the proof of our main technical result, which is obtained through an elementary ``line-crossing'' argument.  Some technical, but elementary facts are derived in an appendix.  In Section \ref{sec: deviation bounds and characterization} we derive through standard methods some deviation inequalities from the moment generating function convexity that prove useful in our applications.  We also show that our study in the general case is equivalent to the case that our variable $X$ is the convex image of a standard Gaussian random variable.  In Section \ref{sec: Renyi divergence bounds} we derive sharp bound on the difference between the $\alpha$ and $\beta$-R\'enyi entropies between two variables $X$ and $Z$, where $Z$ is a standard Gaussian random vector and $X$ is a strongly log-concave random vector. In Section \ref{sec: conic intrinsic volumes} we derive sub-Gaussian deviation bounds for the conic intrinsic volume random variables and in Section \ref{sec: Wills} we obtain the sharpening of Alonso, Hern\'andez, Yepes \cite{alonso2021further} reversal of McMullen's inequality on the sum of intrinsic volumes of a convex body and its mean width \cite{mcmullen1991inequalities}. The appendix contains an elementary algebraic lemma, that allows the invocation of a ``line crossing argument'' that underlies our main technical result, Theorem \ref{thm: main theorem of intro}.

\section{Preliminaries} \label{sec: Prelims}
We let $\gamma_n$ denote the standard $n$-dimensional Gaussian measure, that is 
\[
    \gamma_n(A) = \int_A e^{-|x|^2/2} \frac{dx}{(2\pi)^{\frac n 2}},
\]
an $\mathbb{R}^n$-valued random variable $X$ is a standard Gaussian if $\mathbb{P}(X \in A) = \gamma_n(A)$.  We denote 
\[
    \Phi(t) \coloneqq \gamma_1((-\infty,t)).
\]
We denote by $\Phi^{-1}$ the inverse function of $\Phi$ and call an $\mathbb{R}^n$ valued random variable $X$ Gaussian if it is the affine image of a standard $n$-dimensional Gaussian. 
% For a Banach space $E$, we call an $E$-valued random variable $X$ Gaussian if for every continuous linear function $\ell: E \to \mathbb{R}$, $\ell(X)$ is Gaussian.
\begin{theo}[Ehrhard \cite{ehrhard1983symetrisation}]\label{thm: Borell Ehrhardt}
    For convex sets $A, B$ of $\mathbb{R}^n$ and $\gamma$ a Gaussian measure
    \begin{align} \label{eq: Borell-Ehrhardt inequality}
    \gamma((1-t) A + t B) \geq \Phi \left( (1-t) \Phi^{-1}(\gamma(A)) + t \Phi^{-1}(\gamma(B)) \right)
    \end{align}
\end{theo}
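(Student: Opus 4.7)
The plan is to adapt Steiner symmetrization to the Gaussian setting, following Ehrhard's original approach. For a unit vector $u \in S^{n-1}$, I would define the Ehrhard symmetrization $S_u A$ of a Borel set $A \subseteq \mathbb{R}^n$ by replacing each one-dimensional fiber $A_x \coloneqq A \cap (x + \mathbb{R} u)$, for $x \in u^\perp$, with a half-line $\{x + s u : s \leq a(x)\}$ whose one-dimensional Gaussian measure equals $\gamma_1(A_x)$; equivalently, $a(x) = \Phi^{-1}(\gamma_1(A_x))$. The strategy is then to iterate this operator over a countable dense set of directions so the resulting sets converge, in an appropriate Hausdorff sense, to half-spaces, reducing the problem to a one-dimensional inequality.

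First I would verify three properties of $S_u$. Measure preservation $\gamma(S_u A) = \gamma(A)$ is immediate from Fubini and the definition of $a(x)$. The Minkowski-type inclusion
\[
(1-t) S_u A + t S_u B \subseteq S_u\bigl((1-t) A + t B\bigr)
\]
ensures that symmetrization can only decrease the left-hand side of \eqref{eq: Borell-Ehrhardt inequality}, while measure preservation keeps the right-hand side fixed. Preservation of convexity, $S_u A$ convex whenever $A$ is convex, is what allows the iteration to stay within the hypothesis class. Granting these, one reduces to $A = (-\infty,a]$ and $B = (-\infty,b]$ in $\mathbb{R}$, where $(1-t) A + t B = (-\infty,(1-t) a + t b]$ and both sides of \eqref{eq: Borell-Ehrhardt inequality} coincide with $\Phi((1-t) a + t b)$, producing equality.

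The main obstacle is the convexity-preserving step. Convexity of $S_u A$ is equivalent to concavity of $x \mapsto a(x) = \Phi^{-1}(\gamma_1(A_x))$ on the projection of $A$ onto $u^\perp$. This is a Brunn-type concavity statement for Gaussian sections of convex bodies, and is essentially the codimension-one case of the theorem itself, so a naive induction circles back on what one is trying to prove. Ehrhard's original paper resolves this via a delicate simultaneous induction on dimension, approximating convex sets by finite intersections of half-spaces and tracking how the symmetrization interacts with truncations. An alternative, which I would sketch as a fallback, is Borell's later semigroup approach: interpret $\Phi^{-1}\bigl(\gamma(P_s \mathbf{1}_A)\bigr)$ as a function of the Ornstein–Uhlenbeck time $s$ and establish $\Phi^{-1} \circ \gamma$-concavity in $t$ by verifying a parabolic inequality, bypassing symmetrization entirely and making the $\Phi^{-1}$ concavity structural rather than combinatorial.
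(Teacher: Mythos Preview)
The paper does not prove this theorem: it is stated in Section~\ref{sec: Prelims} as a preliminary result, cited to Ehrhard \cite{ehrhard1983symetrisation}, with a remark that Lata{\l}a \cite{Lat96} and Borell \cite{Bor03} later strengthened it. It is then used as a black box (e.g.\ in Lemma~\ref{lemma: ehrhard concave} and Theorem~\ref{thm: convex functions preserving Gaussianity are affine}). So there is no proof in the paper to compare against.

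That said, your sketch is a faithful outline of Ehrhard's original symmetrization argument, and you correctly flag the genuine difficulty: the convexity-preserving property of the Ehrhard symmetral $S_u A$ amounts to the concavity of $x \mapsto \Phi^{-1}(\gamma_1(A_x))$, which is essentially the $(n-1)$-dimensional case of what you are proving, so a careless induction is circular. Your description of how Ehrhard breaks the circularity (a simultaneous induction on dimension using polyhedral approximation and careful control of truncations) and of Borell's semigroup alternative is accurate at the level of a sketch. What is missing from your proposal, if it were to be a proof rather than a plan, is any actual execution of either route: you name the obstacle and two ways around it but carry out neither. For the purposes of this paper that is fine, since the authors themselves only cite the result.
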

Note that stronger versions of Ehrhard's inequality have been proven, Lata{\l}a \cite{Lat96} extended the inequality to the setting that $A$ is convex and $B$ is Borel, and Borell \cite{Bor03} proved that the inequality holds for all Borel $A$ and $B$.
We take the convention that $\frac 1 p \log \mathbb{E} e^{pf(X)} = \mathbb{E} f(X)$ when $p =0$ in the following main theorem.  Note that when $\mathbb{E} e^{p f(X)}$ is integrable for some $ p >0$ then 
$\lim_{p \to 0} \frac 1 p \log \mathbb{E} e^{pf(X)} = \mathbb{E} f(X)$.

% \begin{theo} \label{thm: main concave}
%     For $f$ a concave (resp. convex) Borel function and $X$ a Gaussian random variable such that $\essinf f = - \infty$,  for $p \in \mathbb{R}$
%     \[
%         \Lambda(p) = \Lambda_{f,\gamma}(p)  = \frac 1 p \log \mathbb{E} e^{p f(X)}
%     \]
%     is concave (resp. convex).  { Moreover, $\Lambda$ is strictly concave (resp. convex) unless $f(X)$ is a Gaussian random variable, in which case $\Lambda$ is affine.}
% \end{theo}

% We observe that it suffices to prove the result for concave $f$, as applying the result to $f = -g$ for convex $g$, gives $\Lambda_{f}$ concave, and hence $ -\Lambda_f(p) = \Lambda_g(-p)$ is convex.
% \begin{theo} \label{thm: main}
%     For $f$ a  convex function and $X$ a Gaussian random variable such that $\esssup f = \infty$, for $p \in \mathbb{R}$
%     \[
%         \Lambda(p) = \Lambda_{f,\gamma}(p)  = \frac 1 p \log \mathbb{E} e^{p f(X)}
%     \]
%     is convex. {\color{red} Moreover, $\Lambda$ is strictly convex unless $f(X)$ is a Gaussian random variable, in which case $\Lambda$ is affine.} 
% \end{theo}

\section{Convexity of Normalized log Moment Generating Functions} \label{sec: convexity of MGF}

The main result of this section is the proof of Theorem \ref{thm: main theorem of intro}, restated below.
\begin{theo} \label{thm: main generalized}
    For $X$ a random variable such that 
    \[
        t \mapsto \Phi^{-1}( \mathbb{P} [X > t])
    \]
    is concave, then $\Lambda : \mathbb{R} \to \mathbb{R} \cup \{\infty\}$ defined by
    \[
        \Lambda_X(p) = \frac 1 p \log \mathbb{E} e^{pX}
    \]
    is concave, and moreover is strictly concave unless $X$ is Gaussian.
\end{theo}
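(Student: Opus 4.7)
The plan is to prove concavity of $\Lambda_X$ by a line-crossing argument: for every affine $L$, I will show that $\Lambda_X - L$ has sign pattern $(-,+,-)$ with at most two sign changes in $p$. Since $p \mapsto \|e^X\|_{L^p} = e^{\Lambda_X(p)}$ is non-decreasing by the standard monotonicity of $L^p$-norms, $\Lambda_X$ is itself non-decreasing, so any line of non-positive slope crosses $\Lambda_X$ at most once and is handled trivially. For a line of positive slope $L(p) = a + bp$, I would observe that $L = \Lambda_{Z^*}$ for the Gaussian $Z^* \sim N(a, 2b)$, which recasts the task as controlling the sign structure of $\Lambda_X - \Lambda_{Z^*}$.

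The hypothesis that $\Phi^{-1} \circ G_X$ is concave, together with the fact that $\Phi^{-1} \circ G_{Z^*}$ is affine, implies their difference is concave. A concave function has at most two zeros with non-negative values strictly between them, so by monotonicity of $\Phi^{-1}$ the survival function difference $G_X - G_{Z^*}$ has at most two sign changes in $t$ with pattern $(-,+,-)$. An integration by parts (legitimate because the Ehrhard-type hypothesis forces Gaussian-like tails on $X$, killing the boundary terms for $p$ interior to the convergence strip) gives
\[
    M_X(p) - M_{Z^*}(p) \;=\; p \int_{\mathbb{R}} e^{pt}\bigl(G_X(t) - G_{Z^*}(t)\bigr)\, dt.
\]

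The core step is a variation-diminishing property of the kernel $e^{pt}$: I expect to prove that $J(p) := \int e^{pt}(G_X - G_{Z^*})\,dt$ inherits at most two sign changes in $p$ with the same pattern $(-,+,-)$. This is exactly where the elementary algebraic lemma of the appendix should fit in, presumably by isolating the central positive piece of $G_X - G_{Z^*}$ and comparing its contribution to $J(p)$ against the two negative tails as $p$ varies. A short case analysis on the positions of the two potential sign changes of $J$ relative to $0$ then shows that $(M_X - M_{Z^*})/p$, after cancellation of the simple zero at $p = 0$ induced by $M_X(0) = M_{Z^*}(0) = 1$, inherits pattern $(-,+,-)$ with at most two sign changes; since this quantity has the same sign as $\Lambda_X - \Lambda_{Z^*} = \frac{1}{p}\log(M_X/M_{Z^*})$, we obtain exactly the line-crossing condition equivalent to concavity of $\Lambda_X$.

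For strict concavity, I would argue that if $\Lambda_X$ agreed with some line $L$ on an interval of positive length, then $M_X(p) = e^{pL(p)}$ would be a Gaussian MGF on that interval; analyticity of $M_X$ on the interior of its convergence strip then forces equality everywhere, so $X$ must be the Gaussian whose MGF is $e^{pL(p)}$. I expect the main obstacle to be the variation-diminishing step itself: carrying it out by elementary means through the appendix's lemma, and then tracking the sign pattern cleanly through the $p$-multiplication in the integration by parts and the eventual division by $p$ needed to read off the sign of $\Lambda_X - \Lambda_{Z^*}$.
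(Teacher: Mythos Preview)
Your plan is essentially the paper's argument: compare $X$ with a Gaussian $Z^*$, use the hypothesis to see that $\phi = G_X - G_{Z^*}$ is positive on a single interval $(x_0,x_1)$, write $\frac{1}{p}\bigl(M_X(p) - M_{Z^*}(p)\bigr) = \int e^{pt}\phi(t)\,dt$, and control the sign of this integral through the appendix lemma on linear combinations of exponentials.

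The paper streamlines your variation-diminishing step by choosing $Z^*$ to be the \emph{secant} Gaussian, i.e.\ the one with $\Lambda_{Z^*}(p_i) = \Lambda_X(p_i)$ at the two given endpoints $p_0 < p_1$. Then $\int e^{p_i t}\phi\,dt = 0$ for $i=0,1$ by construction, so one may freely add multiples of $e^{p_0 t}$ and $e^{p_1 t}$ to $e^{pt}$ to produce $\Psi(t) = e^{pt} + c_0 e^{p_0 t} + c_1 e^{p_1 t}$ vanishing at $x_0,x_1$; the lemma then gives $\{\Psi>0\} = (x_0,x_1)$, hence $\Psi\phi \geq 0$ pointwise and $J(p)\geq 0$. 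Your generic-line route also works but requires the full variation-diminishing statement, which the secant choice bypasses. Note too that your anticipated case analysis near $p=0$ is unnecessary: since $(M_X - M_{Z^*})/p = J$ and $M_{Z^*} > 0$, the sign of $\Lambda_X - \Lambda_{Z^*} = \frac{1}{p}\log(M_X/M_{Z^*})$ is exactly the sign of $J(p)$ with no further bookkeeping. Finally, the paper obtains strict concavity directly from $\int\Psi\phi > 0$ unless $\phi \equiv 0$, though your analyticity argument is also valid.
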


Note that the above is indeed equivalent to Theorem \ref{thm: main theorem of intro}.  For instance if the above holds then for $X$ such that $\Phi^{-1}(\mathbb{P}([X < t])$ is concave, $t \mapsto \Phi^{-1}(\mathbb{P}[ -X > -t])$ is concave so that $\Lambda_{-X}(p)$ is concave, and hence $- \Lambda_{-X}(p) = \Lambda_{X}(-p)$ is convex and Theorem \ref{thm: main theorem of intro} follows.  The converse is similar.

\begin{proof}%[Proof of Theorem \ref{thm: main generalized}]
By the monotonicity of $L_p$-norms against a probability measure, the $\{ \Lambda < \infty \}$ is a ray. We consider first $p_0 <  p_1 \in \mathbb{R} - \{0\}$ such that $\mathbb{E} e^{p_1 X} < \infty$.   Define $X_o \coloneq\sigma Z + \mu$ for $Z$ a standard one dimensional Gaussian, with
    \[
        \mu \coloneqq \frac{p_1 \Lambda(p_0) - p_0 \Lambda(p_1)}{p_1 - p_0} \hspace{8mm} \sigma^2 \coloneqq 2 \frac{ \Lambda(p_1) - \Lambda(p_0)}{ p_1 - p_0},
    \]
    chosen such that $\Lambda_o \coloneqq \frac 1 p \log \mathbb{E} e^{p X_o}$ satisfies 
    \begin{align} \label{eq: big Lambda equality}
        \Lambda(p_i) = \Lambda_o(p_i).
    \end{align}
    
 Observe that $\Lambda_{o}(p) = \frac{\sigma^2}{2} p + \mu$ is affine and
\[
    \Phi^{-1}(\mathbb{P}[X > t]) - \Phi^{-1}(\mathbb{P}[ X_o > t ]) = \Phi^{-1}(\mathbb{P}[X > t]) - \frac{t - \mu}{\sigma}
\]
is thus concave and hence it follows that $\{ \phi >0 \}$ where
\[
    \phi(t) \coloneqq \mathbb{P}[X > t] - \mathbb{P}[X_o > t]
\]
is an interval, with endpoints we denote by $x_0$ and $x_1$. Applying  Fubini-Tonelli to the expression $e^{x} = \int_{\mathbb{R}}e^y \mathbbm{1}_{\{x > y\}} dy $, for $p \neq 0$,
\[
   \frac{1}{p} \left(  \mathbb{E} e^{p X} - \mathbb{E}e^{p X_0} \right) 
        = 
            \int_{\mathbb{R}} e^{pt}\phi(t) dt.
\]
Fix $0 \neq p \in (p_0, p_1)$ and using \eqref{eq: big Lambda equality}, for constants $c_0$ and $c_1$, we have
\[
    \int_{\mathbb{R}} e^{pt}\phi(t) dt = \int_{\mathbb{R}} \Psi(t)  \phi(t) dt.
\]
where $\Psi(t) \coloneqq \Psi_{p,c} (t) \coloneqq e^{pt} + c_0 e^{p_0 t} + c_1 e^{p_1 t} $ and $c_0$ and $c_1$ chosen, through the linear independence of the vectors $(e^{p_k x_i}) \in \mathbb{R}^2$ proven in Lemma \ref{lem: exponential lin ind} such that $\Psi(x_0) = \Psi(x_1) = 0$.
 Since $\Psi$ is a sum of three exponential functions with two zeros, and $p_0 < p < p_1$ it follows that $c_0$ and $c_1$ are negative.  In particular $\Psi$ is negative for large values of $t$.  Thus, again by Lemma \ref{lem: exponential algebra stuff}, $\{ \Psi > 0 \} = (x_0,x_1)$, and hence $\Psi \phi \geq 0$ on $\mathbb{R}$.  Thus
\[
   \frac{1}{p} \left(  \mathbb{E} e^{p X} - \mathbb{E}e^{p X_0} \right) = \int_{\mathbb{R}} \Psi \phi dt \geq 0,
\]
with equality iff $\phi = 0$ almost surely which is iff $X$ is a Gaussian.
After writing $p = (1-\lambda) p_0 + \lambda p_1$, this gives
\[
    \Lambda(p) \geq \Lambda_{o} ((1-t) p_0 + t p_1) = (1-t)\Lambda_{o}(p_0) + t \Lambda_{o}(p_1) = (1-t)\Lambda(p_0) + t \Lambda(p_1).
\]
with equality iff $X$ is Gaussian.
In the case that $\mathbb{E} f(X)  = \infty$ there is nothing else to prove.  When $\mathbb{E} X$ is finite, {\color{black} the proof follows by the continuity of $\Lambda$} since $\Lambda$ is strictly convex at every point of away from zero.
\end{proof}

% Note that for a variable $Y$ such that $\mathbb{P}( Y < t)$ is concave, taking $X = -Y$ and applying the theorem gives that $\frac{1}{p} \log \mathbb{E} e^{-p Y}$ is concave, and hence that $\Lambda_Y(-p)$ and $\Lambda_Y$ are convex. 

The following lemma is an immediate consequence of the Ehrhard inequality.  We include the proof for convenience.
\begin{lemma}[Ehrhard \cite{ehrhard1984inegalites}] \label{lemma: ehrhard concave}
    For a concave  function $f$ and a Gaussian measure $\gamma$ %such that $\essinf  f = - \infty$, 
    the function
    \[
        \Phi^{-1}( \gamma\{ f > t \})
    \]
    is concave on $\mathbb{R}$ as a map to $\mathbb{R} \cup \{-\infty\}$.  For $g$ convex { $\Phi^{-1}( \gamma \{ g < t \})$}, is concave.
\end{lemma}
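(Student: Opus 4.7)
The plan is to apply Ehrhard's inequality (Theorem \ref{thm: Borell Ehrhardt}) directly to the super-level sets of $f$. Write $A_t \coloneqq \{f > t\}$. Since $f$ is concave, each $A_t$ is a convex subset of $\mathbb{R}^n$, which is exactly the hypothesis needed to invoke \eqref{eq: Borell-Ehrhardt inequality}.

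First I would establish the basic Minkowski containment
\[
    (1-\lambda) A_s + \lambda A_t \subseteq A_{(1-\lambda)s + \lambda t}
\]
for $\lambda \in [0,1]$ and $s, t \in \mathbb{R}$. Indeed, if $x \in A_s$ and $y \in A_t$ then concavity of $f$ gives $f((1-\lambda)x + \lambda y) \geq (1-\lambda) f(x) + \lambda f(y) > (1-\lambda) s + \lambda t$. Combining this containment with monotonicity of $\gamma$ and Ehrhard's inequality (applied to the convex sets $A_s, A_t$) yields
\[
    \gamma\bigl( A_{(1-\lambda)s + \lambda t} \bigr) \geq \gamma\bigl( (1-\lambda) A_s + \lambda A_t \bigr) \geq \Phi\bigl( (1-\lambda) \Phi^{-1}(\gamma(A_s)) + \lambda \Phi^{-1}(\gamma(A_t)) \bigr).
\]
Since $\Phi$ is strictly increasing, so is $\Phi^{-1}$, and applying it to both sides gives the desired concavity inequality
\[
    \Phi^{-1}\bigl( \gamma\{ f > (1-\lambda) s + \lambda t\} \bigr) \geq (1-\lambda) \Phi^{-1}\bigl( \gamma \{f > s\} \bigr) + \lambda \Phi^{-1}\bigl( \gamma \{ f > t\} \bigr).
\]
Edge cases where one of the super-level sets is $\gamma$-null or of full measure are handled by the extended-value convention $\Phi^{-1}(0) = -\infty$ and $\Phi^{-1}(1) = +\infty$, under which the inequality still holds in $\mathbb{R} \cup \{-\infty\}$.

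Finally, the statement for convex $g$ is essentially identical: the sub-level sets $\{g < t\}$ are convex by convexity of $g$, and the analogous Minkowski inclusion $(1-\lambda)\{g < s\} + \lambda \{g < t\} \subseteq \{g < (1-\lambda) s + \lambda t\}$ holds, so the same chain of inequalities gives concavity of $t \mapsto \Phi^{-1}(\gamma\{g < t\})$. There is no essential obstacle here beyond correctly invoking Ehrhard's inequality on convex sets; the mild bookkeeping issue is only the handling of the extended real values at the endpoints where the measure is $0$ or $1$.
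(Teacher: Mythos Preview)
Your proof is correct and follows essentially the same route as the paper: establish the Minkowski containment of super-level sets via concavity, apply Ehrhard's inequality to those convex sets, and then apply $\Phi^{-1}$. The only cosmetic difference is that for the convex case the paper reduces to the concave case via $g\mapsto -g$ (noting $\Phi^{-1}(\gamma\{g<t\})=\Phi^{-1}(\gamma\{-g>-t\})$), whereas you rerun the argument directly on the convex sub-level sets; both are equally valid.
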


\begin{proof}
    For  { $t_i \in \mathbb{R}$} and $\lambda \in (0,1)$, the concavity of $f$ implies that
    \[
        \{ f > (1-\lambda) t_0 + \lambda t_1 \} \supseteq (1- \lambda ) \{ f > t_0 \} + \lambda \{ f > t_1 \}.
    \]
    Taking the $\gamma$ measure of the two sets and applying Ehrhardt's inequality to the convex set $\{ f > t_i \}$, 
    \begin{align*}
        \gamma( \{ f > (1-\lambda) t_0 + \lambda t_1 \})
            &\geq
                \gamma( (1-\lambda) \{ f > t_0 \} + \lambda \{ f > t_1 \})
                    \\
            &\geq
                \Phi( (1-\lambda) \Phi^{-1}(\gamma\{f > t_0\}) + \lambda \Phi^{-1}(\gamma\{f > t_1 \}) ).
    \end{align*}
    Taking $\Phi^{-1}$ of both sides completes the proof. 
    %\sout{For $f$ concave, one can repeat the above proof after observing that}
    %\[
    %    {\color{red} \text{omit?} \quad} \{ f > (1-\lambda) t_0 + \lambda t_1 \} \supseteq (1- \lambda ) \{ f > t_0 \} + \lambda \{ f > t_1 \}.
    % \]
    % \sout{holds by concavity.}
    If $g$ is a convex function, then $\Psi(t) \coloneqq\Phi^{-1}(\gamma\{ g < t \}) = \Phi^{-1}( \gamma \{ -g > -t \} )$.  By the previous result $\Psi(-t)$ is concave and hence $\Psi$ is as well.
\end{proof}

When $X = f(Z)$  for $f:\mathbb{R}^n \to \mathbb{R}$ convex and $Z$ a Gaussian then the application of  Lemma \ref{lemma: ehrhard concave},  shows that Theorem \ref{thm: main generalized} gives a strenghtening and a {\it formal} generalization of the following recent result of Chen.

\begin{theo}[Chen \cite{chen2023gaussian}]
For a convex function $f$ and $Z$ a standard Gaussian random vector,
\[
    \Lambda(p) = \frac 1 p \log \mathbb{E} e^{p f(Z)}
\]
is convex.
\end{theo}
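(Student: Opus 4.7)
The plan is to deduce Chen's theorem as an essentially immediate corollary of Theorem \ref{thm: main generalized} combined with Lemma \ref{lemma: ehrhard concave}; all of the analytic work has already been done, so the task is to verify that a convex image of a Gaussian satisfies the Ehrhard-like hypothesis on its tail distribution.

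First I would set $X \coloneqq f(Z)$ and observe that, because $-f$ is concave, Lemma \ref{lemma: ehrhard concave} applied to $-f$ yields that
\[
t \mapsto \Phi^{-1}\bigl( \gamma\{-f > -t\} \bigr) = \Phi^{-1}\bigl( \gamma\{ f < t \} \bigr)
\]
is concave on $\mathbb{R}$ (with values in $\mathbb{R}\cup\{-\infty\}$); equivalently, $t \mapsto \Phi^{-1}(\mathbb{P}[X > t])$ is concave. This is precisely the hypothesis of Theorem \ref{thm: main generalized}, so applying that theorem yields convexity of
\[
\Lambda(p) = \frac{1}{p} \log \mathbb{E} e^{p f(Z)},
\]
which is Chen's conclusion. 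As a bonus, the strict-convexity half of Theorem \ref{thm: main generalized} immediately upgrades the statement: the convexity of $\Lambda$ is strict unless $f(Z)$ is itself Gaussian, which (as the paper goes on to discuss) will only happen when $f$ is affine.

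There is essentially no obstacle to overcome here, since both ingredients have been established: the line-crossing argument underlying Theorem \ref{thm: main generalized} does the heavy lifting, while Lemma \ref{lemma: ehrhard concave} is the one-line application of Ehrhard's inequality to the convex sublevel sets $\{f < t\}$. The only small point of bookkeeping is reconciling the two equivalent formulations of the Ehrhard-like hypothesis (``$\mathbb{P}[X<t]$'' versus ``$\mathbb{P}[X>t]$''); the equivalence is explained in the paragraph following Theorem \ref{thm: main generalized}, and above it is handled transparently by applying the lemma to $-f$ rather than $f$.
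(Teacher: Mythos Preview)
Your approach is exactly the paper's --- combine Lemma \ref{lemma: ehrhard concave} with the main theorem --- but the bookkeeping you flag as ``small'' actually slips. What you obtain from the lemma is concavity of $t \mapsto \Phi^{-1}(\gamma\{f<t\}) = \Phi^{-1}(\mathbb{P}[X<t])$, not of $\Phi^{-1}(\mathbb{P}[X>t])$; these two conditions are \emph{not} equivalent in general, so the ``equivalently'' is false. Moreover, Theorem \ref{thm: main generalized} as stated (hypothesis on $\mathbb{P}[X>t]$) concludes that $\Lambda_X$ is \emph{concave}, not convex.

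The correct route, and the one the paper uses, is: having established that $\Phi^{-1}(\mathbb{P}[X<t])$ is concave, invoke the formulation in Theorem \ref{thm: main theorem of intro} (whose hypothesis is exactly this and whose conclusion is convexity of $\Lambda_X$), or equivalently apply Theorem \ref{thm: main generalized} to $-X$ --- for which $\Phi^{-1}(\mathbb{P}[-X>t])$ is concave --- obtain $\Lambda_{-X}$ concave, and then use $\Lambda_X(p)=-\Lambda_{-X}(-p)$. This is precisely the equivalence spelled out in the paragraph after Theorem \ref{thm: main generalized}. With that correction your argument is complete and matches the paper's proof.
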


\begin{proof}
    Taking $X = f(Z)$, we have by Theorem \ref{lemma: ehrhard concave} that $\mathbb{P}( X < t )$ is concave, and hence applying Theorem \ref{thm: main generalized} to $X$, the result follows.
\end{proof}

Moreover, Theorem \ref{thm: main generalized} strengthens the above to $\Lambda$ is strictly convex unless $f(Z)$ is Gaussian.  In what follows we will characterize the affine functions $f: \mathbb{R}^n \to \mathbb{R}$ as the convex functions that preserve Gaussianity. A fact surely known, but not one we found in the literature. 

\begin{theo}[Shenfield - van Handel \cite{shenfeld2018equality}] \label{thm: equality in Borell Ehrhard}
    Let $A, B \subseteq \mathbb{R}^n$ be closed sets with $\gamma_n (A), \gamma_n(B) \in (0,1)$ then equality holds in Borell-Ehrhard if and only if
    \[
        A = \{ x \in \mathbb{R}^n : \langle a, x \rangle \geq b\}, \ \ \ B = \{ x \in \mathbb{R}^n : \langle a , x \rangle \geq c \}
    \]
    for some $a \in \mathbb{R}^n$ and $b,c \in \mathbb{R}$ or
    $A$ and $B$ are convex and $A = B$.
\end{theo}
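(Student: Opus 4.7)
The plan is to extract the classification from a quantitative version of Borell's semigroup proof of \eqref{eq: Borell-Ehrhardt inequality}. With $P_t$ the Ornstein-Uhlenbeck semigroup and $u_t \coloneqq P_t \mathds{1}_A$, $v_t \coloneqq P_t \mathds{1}_B$, $w_t \coloneqq P_t \mathds{1}_{(1-\lambda)A + \lambda B}$, Borell's approach establishes the pointwise inequality
\[
\Phi^{-1}(w_t(z)) \;\geq\; (1-\lambda)\Phi^{-1}(u_t(x)) + \lambda \Phi^{-1}(v_t(y))
\]
whenever $z = (1-\lambda)x + \lambda y$, and recovers \eqref{eq: Borell-Ehrhardt inequality} in the limit $t \to \infty$. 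Equality in \eqref{eq: Borell-Ehrhardt inequality} then forces the pointwise inequality to be saturated for every $t > 0$ along optimizing triples.

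First I would differentiate the saturation condition in $t$ and combine it with strict concavity properties of $\Phi^{-1}$ acting on Gaussian convolutions to read off a rigidity identity: at each optimizing triple the gradients $\nabla \Phi^{-1}(u_t)$ and $\nabla \Phi^{-1}(v_t)$ must coincide. Second I would branch on whether this common gradient is constant in space. If it is constant, then $u_t(x) = \Phi(\langle a, x \rangle + b_t)$ is the heat-smoothing of a half-space; sending $t \to 0$ and invoking the weak convergence $u_t \to \mathds{1}_A$ identifies $A$ with $\{ x : \langle a, x \rangle \geq b\}$, and the analogous analysis on $v_t$ yields a parallel half-space $B$. Otherwise, the rigidity collapses to $u_t \equiv v_t$ for every $t > 0$, hence $A = B$ by injectivity of $P_t$ on indicators of closed sets; convexity of $A$ is then extracted by applying the equality case to $A = B$ and nontrivial $\lambda \in (0,1)$, which forces $(1-\lambda)A + \lambda A \subseteq A$.

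The main obstacle is the rigidity step. The function $\Phi^{-1}(u_t)$ is singular on the level sets $\{u_t \in \{0,1\}\}$, and legitimizing pointwise differentiation in $t$ requires delicate regularity estimates near this boundary. Shenfeld and van Handel sidestep this analytic issue through a stochastic-calculus argument based on a Brownian interpolation, which encodes the rigidity as a martingale vanishing condition; reproducing their argument via pure PDE methods would require confronting the boundary behavior of $\Phi^{-1}(u_t)$ and the degeneracy of the associated Monge-Amp\`ere-type equation, and this is the genuinely hard technical step that I would in practice defer to \cite{shenfeld2018equality}.
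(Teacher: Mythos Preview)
The paper does not prove this theorem at all: it is quoted verbatim as the equality-case characterization of Shenfeld and van Handel \cite{shenfeld2018equality} and is then invoked as a black box in the proof of Theorem \ref{thm: convex functions preserving Gaussianity are affine}. So there is no ``paper's own proof'' to compare your proposal against.

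Your sketch is a reasonable high-level outline of the Shenfeld--van Handel argument (semigroup interpolation, rigidity forcing equality of the gradients of $\Phi^{-1}\circ u_t$ and $\Phi^{-1}\circ v_t$, then a dichotomy between parallel half-spaces and $A=B$), and you correctly flag that the genuine analytic difficulty lies in making the rigidity step rigorous near the degenerate level sets. Since you yourself conclude by deferring that step to \cite{shenfeld2018equality}, your proposal is not an independent proof but rather a summary of the cited one --- which is exactly the status the paper gives this result.
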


\begin{theo} \label{thm: convex functions preserving Gaussianity are affine}
    If $Z$ is a Gaussian vector and $f$ is a convex function such that $f(Z)$ is a Gaussian random variable then $f$ is affine.
\end{theo}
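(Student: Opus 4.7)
The plan is to use the equality case of the Borell--Ehrhard inequality (Theorem~\ref{thm: equality in Borell Ehrhard}) to force every nontrivial sublevel set of $f$ to be a half-space with a common normal, thereby reducing the problem to one dimension.

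After an affine change of variables I may assume $Z \sim \gamma_n$. If $f(Z)$ is almost surely constant, then so is the real-valued continuous convex function $f$, and the conclusion is trivial; otherwise write $f(Z) \sim N(m,s^2)$ with $s>0$. The proof of Lemma~\ref{lemma: ehrhard concave}, applied to $f$, deduces the concavity of $t \mapsto \Phi^{-1}(\gamma_n(A_t))$, where $A_t := \{f \leq t\}$, by chaining the convex-combination inclusion $(1-\lambda) A_{t_0} + \lambda A_{t_1} \subseteq A_{(1-\lambda) t_0 + \lambda t_1}$ with Borell--Ehrhard applied to $A_{t_0}$ and $A_{t_1}$. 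Under the Gaussianity hypothesis $\Phi^{-1}(\gamma_n(A_t)) = (t-m)/s$ is actually affine in $t$, so both inequalities in that chain must be equalities for every pair $t_0 < t_1$.

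I then invoke the equality case Theorem~\ref{thm: equality in Borell Ehrhard} for each such pair. The alternative $A_{t_0}=A_{t_1}$ is ruled out, since it would leave $(t_0,t_1]$ unoccupied by $f(Z)$, contradicting non-degeneracy; so $A_{t_0}$ and $A_{t_1}$ must be half-spaces with a common outward normal. Since the outward normal of a half-space is unique up to positive scaling, fixing $t_0$ and varying $t_1$ produces a single $a \in \mathbb{R}^n \setminus \{0\}$ and a function $b : \mathbb{R} \to \mathbb{R}$ with $A_t = \{x : \langle a, x\rangle \leq b(t)\}$ for every relevant $t$. Two points with the same value of $\langle a, \cdot \rangle$ therefore lie in exactly the same sublevel sets, and hence $f(x) = \phi(\langle a, x\rangle)$ for some convex $\phi : \mathbb{R} \to \mathbb{R}$.

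At this point the problem reduces to showing: if $Y$ is a non-degenerate Gaussian and $\phi : \mathbb{R} \to \mathbb{R}$ is convex with $\phi(Y)$ a non-degenerate Gaussian, then $\phi$ is affine. Since $\phi(Y)$ has support $\mathbb{R}$, the range of $\phi$ is all of $\mathbb{R}$; but a convex function on $\mathbb{R}$ that is not monotone attains its infimum and hence has range bounded below, so $\phi$ must be monotone, say non-decreasing after possibly replacing $a$ by $-a$. Any flat segment of $\phi$ would create an atom in the law of $\phi(Y)$, so $\phi$ is strictly monotone, continuous, and bijective on $\mathbb{R}$, and matching distribution functions forces $\phi^{-1}$, and hence $\phi$, to be affine. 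The delicate step I anticipate is the second paragraph's bookkeeping: upgrading the tight chain of inequalities into Borell--Ehrhard equality for every pair of sublevel sets, and then stitching the resulting half-spaces together into one common normal direction. Once that structural fact is secured, the one-dimensional endgame is routine.
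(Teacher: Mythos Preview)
Your proposal is correct and follows essentially the same route as the paper: force equality in the Borell--Ehrhard chain for the sublevel sets $\{f\le t\}$, invoke Theorem~\ref{thm: equality in Borell Ehrhard} to see they are parallel half-spaces, and read off that $f$ factors through a single linear functional. The only cosmetic difference is that the paper additionally normalizes $f(Z)$ to be standard Gaussian, which lets it compute directly that $\{f\le t\}=\{\langle a,x\rangle\le t\}$ and conclude $f=\langle a,\cdot\rangle$ without the separate one-dimensional monotonicity argument you give at the end.
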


\begin{proof}
    By a change of variable it suffices to consider $Z$ and $f(Z)$ standard Gaussians.  Thus $\Psi(t) \coloneqq \Phi^{-1}( \mathbb{P}(f(Z) \leq t)) = t$. Hence for $t,s \in \mathbb{R}$,
    by convexity $\{  f \leq (1-\lambda) t + \lambda s \} \supseteq (1-\lambda) \{f \leq t \} + \lambda \{f \leq s\}$ hence
    \begin{align*}
        (1-\lambda) t + \lambda s
            &\geq \Phi^{-1}(\gamma_n((1-\lambda)\{f \leq t \} +  \lambda \{ f \leq s\}))
                \\
            &\geq (1-\lambda)\Phi^{-1}(\gamma_n\{f \leq t \}) + \lambda \Phi^{-1}(\gamma_n \{f \leq s \})
                \\
            &=
                (1-\lambda) t + \lambda s
    \end{align*}
    Thus, it follows that   we  have  equality  in the Borell-Ehrhard inequality and for $t \neq s$, $\gamma \{ f \leq s\} \neq \gamma\{f \leq t\}$, and hence $\{f \leq s \} \neq \{ f \leq t \}$ and hence it follows that we must have that there exists a unit vector $a$
    \[
        \{f \leq t\} = \{x\in \mathbb{R}^n : \langle a,x\rangle \leq \lambda(t)\}
    \]
    However, since $t = \gamma_n \{ f \leq t\} = \gamma_n \{x \in \mathbb{R}^n : \langle x, a \rangle \leq \lambda(t) \} = \lambda(t)$.  Thus $f$ has the same sublevel sets as $x \mapsto \langle a,x\rangle$, and hence we have $f(x) = \langle a, x\rangle$, completing the proof.
    
\end{proof}

Thus in the context of a Gaussian random variable composed with a convex function we have the following.
\begin{coro} \label{cor: convex function of Gaussian}
    For $Z$ a Gaussian vector in $\mathbb{R}^n$ and $f$ a convex function on $\mathbb{R}^n$, then
    \[
        \Lambda(p) = \frac 1 p \log \left( \mathbb{E}e^{p f(Z)}\right)
    \]
    is convex, and strictly so unless $f$, and hence $\Lambda$, are affine.

    \begin{proof}
        Theorem \ref{thm: main generalized} already gives that $\Lambda$ is convex, and strictly so unless $f(Z)$ is Gaussian. By Theorem \ref{thm: convex functions preserving Gaussianity are affine}, $f(Z)$ is non-Gaussian unless $f$ is affine. 
    \end{proof}
\end{coro}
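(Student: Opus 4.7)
The plan is to view this corollary as a straightforward synthesis of the two main results already established in the section: Theorem \ref{thm: main generalized}, which gives convexity (and strictness) of $\Lambda_X$ for variables $X$ with an Ehrhard-type property on their distribution function, and Theorem \ref{thm: convex functions preserving Gaussianity are affine}, which identifies the equality case. Since the corollary is about $X = f(Z)$, the role of Lemma \ref{lemma: ehrhard concave} is to bridge the convex-function hypothesis to the distribution-level hypothesis of Theorem \ref{thm: main generalized}.

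First, I would set $X := f(Z)$. By Lemma \ref{lemma: ehrhard concave} applied to the convex function $f$ and the Gaussian measure $\gamma_n$, the map $t \mapsto \Phi^{-1}(\gamma_n\{f < t\}) = \Phi^{-1}(\mathbb{P}[X < t])$ is concave on $\mathbb{R}$. By the remark immediately following the statement of Theorem \ref{thm: main generalized} (using that the condition on $\mathbb{P}[X < t]$ is equivalent, up to sign, to the one on $\mathbb{P}[X > t]$ by passing to $-X$), $X$ satisfies the hypothesis of Theorem \ref{thm: main generalized}. This yields that $\Lambda(p) = \tfrac{1}{p}\log \mathbb{E} e^{p f(Z)}$ is convex on its domain, and moreover strictly convex unless $f(Z)$ is a Gaussian random variable.

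Next, I would rule out the non-strict case exactly when $f$ is not affine. If equality in convexity occurs at some interior pair, the theorem forces $f(Z)$ to be Gaussian; Theorem \ref{thm: convex functions preserving Gaussianity are affine} then gives that $f$ must be affine. Conversely, if $f(x) = \langle a, x \rangle + b$ is affine, then $f(Z)$ is itself Gaussian with some mean $\mu$ and variance $\sigma^2$, and a direct computation of the moment generating function of a Gaussian gives $\Lambda(p) = \tfrac{\sigma^2}{2} p + \mu$, which is affine in $p$. This establishes the two-way correspondence: $\Lambda$ is affine iff $f$ is affine, and strict convexity of $\Lambda$ holds otherwise.

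There is no genuine obstacle here since all the heavy lifting has already been done in the earlier results; the only care point is the minor bookkeeping between the $\{f < t\}$ form provided by Lemma \ref{lemma: ehrhard concave} and the $\{X > t\}$ form appearing in the statement of Theorem \ref{thm: main generalized}, which is handled by the sign-change argument already noted in the text. The converse direction (affine $f$ implies affine $\Lambda$) is trivial from the Gaussian MGF and deserves only a one-line remark.
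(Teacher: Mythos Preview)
Your proposal is correct and follows essentially the same approach as the paper: invoke Lemma \ref{lemma: ehrhard concave} to place $X=f(Z)$ in the scope of Theorem \ref{thm: main generalized}, then use Theorem \ref{thm: convex functions preserving Gaussianity are affine} to identify the equality case as $f$ affine. You are slightly more explicit than the paper in spelling out the bridge via Lemma \ref{lemma: ehrhard concave} and in verifying the converse (affine $f$ gives affine $\Lambda$), but the argument is the same.
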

 Again, applying the result to $f = -g$ for $g$ concave implies that $\Lambda$ is concave when $g$ is.

\section{Convex Images of a Standard Normal} \label{sec: deviation bounds and characterization}
Here we will derive deviation inequalities as consequences of convexity properties of the moment generating functions.  Following this, we will characterize $\{\mu, f\}$ such that $\lambda \mapsto \Phi^{-1}(\mu \{ f \leq \lambda\})$ is concave by the property that the pushforward of $\mu$ by $f$, $f \# \mu$ is the convex image of a one dimensional standard Gaussian.  But first we will derive tail bounds for variables $X$ such that $\Lambda_X$ is convex.

\subsection{General Deviation Bounds}
As mentioned the convexity of $\Lambda_{f(Z)}$ for $f$ convex and $Z$ Gaussian was proven \cite{chen2023gaussian}, where standard techniques give one sided deviation bounds. For the convenience of the reader we sketch the derivation of the inequality we will use.  

For $X$ such that $\frac 1 \lambda \log \mathbb{E} e^{\lambda X}$ is convex and $\mathbb{E} X^2 < \infty $ we outline the fact that
\begin{align} \label{eq: MGF bound under convexity}
    \mathbb{E} e^{\lambda X} \leq e^{\frac{\lambda^2}{2} \var(X) + \lambda \mathbb{E} X }
\end{align}
for $\lambda \leq 0$.  It suffices to prove the result when $\mathbb{E}X = 0$, in which case $\Lambda(0) = 0$, and we have by the increasingness of $\Lambda'$
\begin{align} \label{eq: FTC on Lambda}
    \Lambda(\lambda) = \int_0^{\lambda} \Lambda'(t) dt \geq \lambda \Lambda'(0)
\end{align}
In this case
\[
    \frac{\Lambda(\lambda)}{\lambda} = \frac{1}{\lambda^2} \log \left( 1 + \left( \mathbb{E}e^{\lambda X} -1 \right) \right) = \frac 1 {\lambda^2} \left( \sum_{n=2}^\infty \frac{\lambda^n \mathbb{E}X^n}{n!} + o(\lambda^2) \right) = \frac{\var(X)}{2} + o(1) 
\]
Thus $\Lambda'(0) = \var(X)/2$, inserting into \eqref{eq: FTC on Lambda}, and rearranging the result follows.  Note that by Chernoff's method we arrive at the following.

\begin{theo}[Chen \cite{chen2023gaussian}]
    For area valued random variable $X$, with $\mathbb{E} X^2 < \infty$ such that $\lambda \mapsto \frac 1 \lambda \log \mathbb{E} e^{\lambda X}$ is convex on $(-\infty, 0]$, we have
    \[
        \mathbb{E} e^{\lambda X} \leq \exp  \left \{ \frac{\lambda^2}{2} \var(X) + \lambda \mathbb{E} X \right \}
    \]
    and
    \[
        \mathbb{P}( X \leq \mathbb{E}X - t) \leq \exp \left\{ - \frac{t^2}{2  \var(X)} \right \}
    \]
\end{theo}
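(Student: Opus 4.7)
The plan is to formalize the sketch provided immediately before the theorem statement. First I would reduce to the mean-zero case: replacing $X$ by $X - \mathbb{E} X$ leaves $\var(X)$ unchanged and shifts $\Lambda$ by an additive constant, preserving convexity on $(-\infty, 0]$. So it suffices to establish
\[
    \mathbb{E} e^{\lambda X} \leq e^{\lambda^2 \var(X)/2} \qquad (\lambda \leq 0)
\]
under the extra assumption $\mathbb{E} X = 0$; the general MGF bound then follows by factoring out $e^{\lambda \mathbb{E} X}$.

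Next, I would rigorously identify the one-sided derivative $\Lambda'(0^-) = \var(X)/2$ by means of the Taylor expansion indicated in the sketch. The hypothesis that $\Lambda$ is (in particular) real-valued on some interval $[-\lambda_0, 0]$ with $\lambda_0 > 0$ provides a dominating function for $e^{\lambda X}$ on $\{X < 0\}$, while $e^{\lambda X} \leq 1$ on $\{X \geq 0\}$ when $\lambda \leq 0$; combined with $\mathbb{E} X^2 < \infty$, dominated convergence yields
\[
    \mathbb{E} e^{\lambda X} = 1 + \tfrac{\lambda^2}{2}\var(X) + o(\lambda^2)
\]
as $\lambda \to 0^-$. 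Taking logarithms and dividing by $\lambda$ gives $\Lambda(\lambda) = \tfrac{\lambda}{2}\var(X) + o(\lambda)$, so $\Lambda$ extends continuously at $0$ with $\Lambda(0)=0$ and $\Lambda'(0^-) = \var(X)/2$. Convexity of $\Lambda$ on $(-\infty,0]$ forces $\Lambda'$ to be non-decreasing there, so $\Lambda'(t) \leq \Lambda'(0^-)$ for $t \leq 0$, and the fundamental theorem of calculus gives
\[
    \Lambda(\lambda) = -\int_\lambda^0 \Lambda'(t)\, dt \geq \lambda \Lambda'(0^-) = \tfrac{\lambda}{2}\var(X)
\]
for $\lambda \leq 0$. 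Multiplying through by $\lambda \leq 0$ reverses the inequality, and exponentiating yields the MGF bound in the centered case.

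For the deviation inequality I would then invoke the standard Chernoff method: for $t > 0$ and $\lambda \leq 0$, Markov's inequality and the MGF bound (applied to $X - \mathbb{E} X$) give
\[
    \mathbb{P}(X - \mathbb{E} X \leq -t) \leq e^{\lambda t}\, \mathbb{E} e^{\lambda(X - \mathbb{E} X)} \leq \exp\!\left\{\tfrac{\lambda^2}{2}\var(X) + \lambda t\right\}.
\]
Optimizing over $\lambda \in (-\infty, 0]$ at $\lambda^{*} = -t/\var(X)$ produces the claimed sub-Gaussian tail $\exp\{-t^2/(2\var(X))\}$.

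The main (and only mildly) technical obstacle is the rigorous identification of $\Lambda'(0^-)$ via the Taylor expansion: since $\lambda \leq 0$, the integrand $e^{\lambda X}$ is uniformly bounded only on $\{X \geq 0\}$, so one must use the hypothesized finiteness of $\mathbb{E} e^{\lambda X}$ for some $\lambda < 0$ to supply a dominating function on $\{X < 0\}$ and justify differentiating under the expectation. Once this analytic point is settled, the remainder is the elementary convexity-plus-Chernoff computation sketched above.
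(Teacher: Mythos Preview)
Your proposal is correct and follows essentially the same approach as the paper: reduce to the centered case, identify $\Lambda'(0^-)=\var(X)/2$ via the Taylor expansion, use convexity of $\Lambda$ plus the fundamental theorem of calculus to obtain the MGF bound, and then apply Chernoff with the optimal choice $\lambda=-t/\var(X)$. You supply more detail on the dominated-convergence justification of the expansion than the paper does, but the argument is otherwise identical.
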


\begin{proof}
    The inequality for the moment generating functions is already outlined above.  The following argument for the tail bounds is standard. For $\lambda \leq 0$, using Markov's inequality and then the moment generating function bound previously derived we have
    \begin{align*}
        \mathbb{P}( X \leq \mathbb{E}X - t)
            &=
                \mathbb{P} \left( e^{\lambda (X- \mathbb{E}X)} \geq e^{-\lambda t } \right)
                    \\
            &\leq 
                e^{\lambda t}\mathbb{E}e^{\lambda (X- \mathbb{E}X) }
                    \\
            &\leq 
                \exp \left\{ \frac{\lambda^2}{2} \var(X) + \lambda t  \right\}.
    \end{align*}
    Taking $\lambda = \frac{-t}{\var(X)}$ completes the proof.
\end{proof}

% Note that when $X$ is a non-negative random variables it may be more natural to consider the following immediate corollary.

% \begin{coro}
%     For a $X$ a random variable with $\mathbb{E} X > 0$ $\mathbb{E} X^2 <\infty$ and $\lambda \mapsto \frac 1 \lambda \log \mathbb{E}e^{\lambda X}$ convex for $\lambda \leq 0$, then
%     \[
%         \mathbb{P}(X \leq (1-\varepsilon) \mathbb{E} X ) \leq 
%     \]
% \end{coro}

\subsection{Characterization of $\Phi^{-1}$-concavity of sublevel sets}
In Valettas \cite{valettas2019tightness} it is mentioned that ``the property that $\Phi^{-1}(\gamma_n \{ f \leq t \} )$ is concave is shared by other significant
 distributions at the cost of fairly restricting the class of convex functions''.  For example, borrowing from \cite{paouris2018gaussian}  and \cite{valettas2019tightness}, we have the following.
 
 \begin{theo}
For $X$, independent $\mathbb{R}^n$-valued random variable with distribibution function $\phi_X$
 \[
    \phi_X(x) = \exp \left\{ - \sum_{k=1}^n x_k \right \} \mathbbm{1}_{(0,\infty)^n},
 \]
 and $f$ is a coordinate increasing convex function then
 \[
    \mathbb{P}(f(X) \leq \mathbb{E}f(X) - t) \leq \exp \left\{ \frac{-t^2}{ 2\var(f(X))} \right\}
 \]
\end{theo}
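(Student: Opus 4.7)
The plan is to reduce the exponential setting to the Gaussian one via a coordinate-wise quantile transport, apply Corollary \ref{cor: convex function of Gaussian} to establish convexity of $\Lambda_{f(X)}$, and then invoke the Chernoff bound recalled earlier in this section.

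First, I would set $T_1(z) \coloneqq -\log(1 - \Phi(z))$, the monotone transport of a standard Gaussian to a standard exponential. Its derivative $T_1'(z) = \phi(z)/(1-\Phi(z))$ is the Gaussian hazard rate, which is strictly increasing by log-concavity of $\phi$, so $T_1$ is increasing and convex on $\mathbb{R}$. Writing $T(z_1, \ldots, z_n) = (T_1(z_1), \ldots, T_1(z_n))$ and letting $Z \sim N(0, I_n)$, one has $T(Z) \stackrel{d}{=} X$, since coordinates are independent and each $T_1(Z_k)$ is standard exponential.

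Next, I would verify that $g \coloneqq f \circ T$ is convex on $\mathbb{R}^n$. For $z, w \in \mathbb{R}^n$ and $\lambda \in (0,1)$, the coordinate-wise convexity of $T_1$ gives $T(\lambda z + (1-\lambda) w) \leq \lambda T(z) + (1-\lambda) T(w)$ componentwise; since $f$ is coordinate-increasing one may apply $f$ to this inequality preserving direction, and convexity of $f$ on the resulting right-hand side closes the estimate. Hence $f(X) \stackrel{d}{=} g(Z)$ is the convex image of a standard Gaussian, so by Corollary \ref{cor: convex function of Gaussian} the function $\Lambda_{f(X)}$ is convex on its domain, and in particular on $(-\infty, 0]$.

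Finally, the Chernoff-type deviation bound of Chen recalled earlier in this section, applied to $f(X)$ with $\lambda \leq 0$ and variance proxy $\var(f(X))$, yields the stated sub-Gaussian lower tail. The only delicate point is the convexity of $T_1$, which amounts to the classical monotonicity of the Gaussian hazard rate; the essential structural ingredient is that coordinate-increasingness of $f$ is precisely what allows the convexity of $T$ and the convexity of $f$ to combine into convexity of $g$, and this monotonicity hypothesis cannot be dropped.
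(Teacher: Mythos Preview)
Your argument is correct, and the overall strategy matches the paper's: exhibit $f(X)$ as a convex function of a Gaussian vector and then invoke the deviation bound derived just above. The specific representation you choose, however, differs. The paper works in $\mathbb{R}^{2n}$ and sets $F(x,y)=f(x_1^2+y_1^2,\dots,x_n^2+y_n^2)$, using that each $(x_i,y_i)\mapsto x_i^2+y_i^2$ is manifestly convex and that the sum of two independent squared Gaussians is exponential; convexity of $F$ then follows from exactly the same ``coordinate-increasing convex composed with coordinate-wise convex'' lemma you use. Your route stays in dimension $n$ and uses the monotone quantile transport $T_1(z)=-\log(1-\Phi(z))$, at the cost of checking that the Gaussian hazard rate is increasing. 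Your approach dovetails nicely with the optimal-transport characterization developed in the next subsection (and shows directly that $f(X)$ is the convex image of an $n$-dimensional, rather than $2n$-dimensional, Gaussian), while the paper's version avoids any appeal to properties of $\Phi$ beyond the fact that $|Z|^2$ in two dimensions is exponential. Both are short and essentially equivalent in difficulty.
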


\begin{proof}
   Taking $F: \mathbb{R}^{2n} \to \mathbb{R}$ by $F(x,y) = f(x_1^2 + y_1^2, \dots, x_n^2 +y_n^2)$, we have $F$ to be convex and since the sum of two independent Gaussians is exponential, the proof concludes.
\end{proof}

The above is a straightforward adaptation of Theorem \ref{thm: main generalized}. In what follows we will show that all examples of measures $\{\mu, f\}$ such that $t \mapsto \Phi^{-1} (\mu \{ f \leq t \})$ is concave are straightforward adaptations of Theorem \ref{thm: main generalized}.   More explicitly, we will show that the class measures realized as the pushforward of a probability measure, whose distribution function composed with $\Phi^{-1}$ is concave, are characterized as the convex images of a Gaussian. To this end, we develop some elementary ideas from the optimal transport.  

In the theory of optimal transport on metric measure space $E$, \cite{Vil09:book} one studies
\[
    \inf_{\pi \in \Pi(\mu,\nu)} \int_{E\times E} c(x,y) d \pi
\]
where $\Pi(\mu, \nu)$ is the space of all measures $\pi$ on $E \times E$ that couple two probability measures $\mu$ and $\nu$ in the sense that $\pi(A \times E ) = \mu(A)$ and $\pi(E \times B) = \nu(B)$ for arbitrary measurable $A,B \subseteq E$, and $c: E \times E \to \mathbb{R} \cup \{\infty\}$ is a (typically lower semi-continuous) ``cost function''.  The subject is rich and has deep connections across mathematics that we will not attempt to do justice. With this said, the case that $E = \mathbb{R}$ and $c(x,y) = |x-y|^2$ for measures $\mu$ and $\nu$ with convexly supported densities with respect to the Lebesgue measure is remarkably simple.  Writing 
\[
    F(x) = \mu(-\infty, x] \  \ \hbox{ and }  \ \ G(x) = \nu (-\infty, x]
\]
Suppose for an increasing $T$, the pushfoward of $\mu$ under $\mu$, $T\# \mu$ satisfies $T\# \mu = \nu$, then
\[
    G(x) = \nu (-\infty,x] = T\#\mu (-\infty,x] = \mu (-\infty, T^{-1}(x)] = F(T^{-1}(x))
\]
Hence $F^{-1} \circ G = T^{-1}$ and hence $T = G^{-1} \circ F$.  That is $(G^{-1} \circ F)\#\mu = \nu$.  Written in terms of random variables $X \sim \mu$ and $Y \sim \nu$, 
\[
    G^{-1}(F(X)) = Y
\]
When we write that $\Phi^{-1}( \mathbb{P}( X \leq t))$ is concave, taking $F(x) = \mathbb{P}(X \leq x)$, we are writing that $\Phi^{-1} \circ F$ the transport map taking $X$ to a standard Gaussian $Z$ is concave.  Note that the inverse function of an increasing concave function\footnote{Direct computation $(f^{-1})'(x) = \frac 1 {f'(f^{-1}(x))} $ which is an increasing function as $f^{-1}$ is increasing, $f'$ is decreasing and $1/x$ is decreasing. } is an increasing convex function.  Thus in particular, $X \sim F^{-1}\circ \Phi(Z)$, where $F^{-1}\circ \Phi$ is a convex function.  In summary we have the following characterization.

\begin{theo}
For a function $f$ and a random variable $X$ with distribution $\mu$, then
\[
    \Phi^{-1}(\mu \{ f \leq t \})
\]
is concave if and only if $f(X) \sim \varphi(Z)$ for $\varphi$ convex and $Z$ a standard $\mathbb{R}$-valued Gaussian random variable.
\end{theo}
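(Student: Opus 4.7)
The plan is to read the statement through the optimal transport lens that was set up in the paragraphs just before the theorem. Let $Y \coloneqq f(X)$ and write $F_Y(t) \coloneqq \mathbb{P}(Y \leq t) = \mu\{f \leq t\}$. The hypothesis is then precisely that the monotone transport map
\[
    T \coloneqq \Phi^{-1} \circ F_Y
\]
carrying $Y$ to a standard one-dimensional Gaussian is concave. The target of the theorem is to trade this concavity for a convex transport in the opposite direction, namely a $\varphi$ with $\varphi(Z) \sim Y$.

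For the forward direction, I would assume $T$ is concave and define $\varphi \coloneqq F_Y^{-1} \circ \Phi$, interpreting $F_Y^{-1}$ as the usual right-continuous quantile generalized inverse when $F_Y$ has flat portions or jumps. By the footnote immediately preceding the theorem, the inverse of an increasing concave function is increasing and convex, so $\varphi$ is convex. A one-line distribution function computation then yields $\varphi(Z) \sim Y$: by monotonicity,
\[
    \mathbb{P}(\varphi(Z) \leq t) \;=\; \mathbb{P}(Z \leq T(t)) \;=\; \Phi(T(t)) \;=\; F_Y(t),
\]
which is exactly the law of $Y = f(X)$.

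For the converse, suppose $f(X) \sim \varphi(Z)$ with $\varphi$ convex. Applying Lemma~\ref{lemma: ehrhard concave} to $\varphi$ and the standard Gaussian measure $\gamma_1$ shows that $t \mapsto \Phi^{-1}(\gamma_1\{\varphi \leq t\})$ is concave. The distributional identity
\[
    \gamma_1\{\varphi \leq t\} \;=\; \mathbb{P}(\varphi(Z) \leq t) \;=\; \mathbb{P}(f(X) \leq t) \;=\; \mu\{f \leq t\}
\]
then transfers the concavity directly to $t \mapsto \Phi^{-1}(\mu\{f \leq t\})$, completing the equivalence.

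The main obstacle is bookkeeping rather than conceptual: one must handle the degenerate cases where $F_Y$ fails to be strictly increasing or continuous, so that the naive inverse $F_Y^{-1}$ is not literally a function. This is the standard issue in one-dimensional optimal transport and is resolved by working with the right-continuous quantile inverse and by interpreting concavity on the natural domain where $F_Y \in (0,1)$, extending to the closure as $\mathbb{R} \cup \{\pm \infty\}$-valued functions. Once this convention is fixed, the distributional identity $\varphi(Z) \sim Y$ remains valid essentially by construction, and no further analytic work is required.
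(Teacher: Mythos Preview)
Your proof is correct and matches the paper's approach: the forward direction is exactly the transport-map argument outlined in the paragraphs before the theorem (invert the concave increasing map $\Phi^{-1}\circ F_Y$ to get a convex $\varphi$), and your converse via Lemma~\ref{lemma: ehrhard concave} is precisely what the paper relies on implicitly, since that lemma already showed any convex image of a Gaussian has the Ehrhard-concavity property. The only addition over the paper's terse ``In summary'' is that you spell out the converse and flag the generalized-inverse bookkeeping, both of which are appropriate.
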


Let us emphasize that this only characterizes the concavity of $\Phi^{-1}$ composed with the measure of sublevel sets of a function, a sufficient condition to guarantee that $p \mapsto \frac 1 p \log \mathbb{E} e^{p f(X)}$ is convex for $X \sim \mu$. However, the convexity of the normalized log-moment generating function holds for many other distributions that are not convex images of Gaussian random variables.  For instance, if $X \sim$ Poisson($p$) then 
\[
    \frac 1 \lambda \log \mathbb{E}e^{\lambda X} = \frac{p (e^\lambda - 1)}{\lambda}
\]
is a convex function function, but $X$ is plainly not the convex image of a Gaussian random variable.

\section{R\'enyi Divergence Comparisons} \label{sec: Renyi divergence bounds}

For $\alpha \in (0,1) \cup (1,\infty)$, the R\'enyi entropy of a random variable $X$ with density $f$ with respect to the Lebesgue measure is defined by
\[
   h_\alpha(X) \coloneqq h_\alpha(f) \coloneqq \frac{\log  \int f^\alpha dx}{ 1 - \alpha},
\]
Completing the definition for $\alpha \in \{ 0, 1 , \infty\}$, by
\begin{align*}
    h_0(X) &\coloneqq \log \mu \{ f > 0\}
        \\
    h_1(X) &\coloneqq - \int f \log f d\mu
        \\
    h_\infty &\coloneqq - \log \|f\|_\infty,
\end{align*}
where the $\|f\|_\infty$ denotes the essential supremum of $f$ taken with respect to $\mu$, we see that the R\'enyi entropy interpolates, notions of volume when $\alpha =0$, the classical Shannon entropy when $\alpha =1$, the restriction of the $L_2$ Hilbert space  to probability densities when $\alpha =2$ and notions of concentration when $\alpha = \infty$.  As such it has provided a bridge between disciplines. Perhaps the most celebrated such connection is that of the Brunn-Minkowski inequality and the Shannon-Stam Entropy Power inequality \cite{CC84}, explicitly connected through Sharp Young inequality in \cite{DCT91}.  These connections go much further, we direct the interested reader to more work \cite{CC09, madiman2017forward, van2014renyi, LCCV18, MMX17:1}.  Observing that
\[
    h_\alpha(X) = - \log \left(\int f^{\alpha-1} d \nu \right)^{\frac 1 {\alpha-1}}
\]
where $\nu$ is the probability measure induced by the density $f$ against $dx$, we see through an application of Jensen's inequality that the $\alpha \mapsto h_\alpha(X)$ is a non-increasing function, so that $h_\alpha(X) \leq h_\beta(X)$ for $\alpha \geq \beta$. More interestingly, this inequality can be reversed up to sharp constants for a reasonably large and important classes of random variables.  In particular (see \cite{FMW16} for details), for $X \sim f$ with $f$ log-concave in the sense that $f = e^{-V}$ for a convex function $V : \mathbb{R}^n \to \mathbb{R} \cup \{ \infty \},$ then for $\alpha \geq \beta$
\[
    h_\alpha(X) \leq h_\beta(X) \leq  h_\alpha(X) + c_{\alpha,\beta}
\]
where $c_{\alpha, \beta} = h_\beta(Y) - h_\alpha(Y)$ where $Y \sim g(x) = \mathbbm{1}_{(0,\infty)^n}(x) e^{- \sum_{i=1}^n x_i}$.  In short, among log-concave distributions, the exponential delivers the largest difference between R\'enyi entropies.  This is echoed to some degree in the discrete setting, when the Lebesgue measure is replaced by the counting measure on $\mathbb{Z}$ \cite{melbourne2020reversal,melbourne2023discrete}.
In what follows we establish analogous results for what would be the entropy against the Gaussian measure, to this end let us recall the $\alpha$-R\'enyi divergence between random variables $X \sim \mu$ and $Y \sim \nu$ with Radon-Nikodym derivative $\frac{d\mu}{d\nu}$ is  defined by 
\[
    D_\alpha(X||Y) \coloneqq (\alpha -1 )^{-1} \log \int \left( \frac{d\mu}{d\nu} \right)^\alpha d\nu = (\alpha - 1)^{-1} \log \int \left( \frac{d\mu}{d\nu} \right)^{\alpha -1} d\mu
\]
We take for convenience the natural logarithm in the above expression.  The R\'enyi divergence interpolates several important statistical distances.  In particular the Kullback-Leibler divergence also referred to as the relative entropy,
\[
    D_1(X||Z) = \lim_{\alpha \to 1} D_\alpha(X||Z) = D_{KL}(X||Z) = \int_E \log \left( \frac{d\mu}{d\gamma} \right) d\gamma,
\]
the Hellinger distance
\[
    \mathcal{H}el^2(X;Z) = \int_E \left(\sqrt{ \frac{d\mu}{d\gamma}} - 1\right) d\gamma
\]
satisfies 
\[
D_{ 1 /2}(X||Z) = - 2 \log \left( 1 - \frac{ \mathcal{H}el^{2}(X;Z)}{2} \right),
\]
and the $\chi^2$ divergence defined by 
\[
    \chi^2(X;Z) = \int_E \left( \frac{d\mu}{d\gamma} -1 \right)^2 d\gamma,
\]
satisfies $D_2(X||Z) = \log \left( 1 + \chi^2(X;Z) \right)$.

The $\alpha$-R\'enyi divergence is a monotone function of an  $\alpha$-divergence, the $f$ divergence associated to the convex function 
\[
    f_\alpha(x) \coloneqq \frac{x^\alpha -1}{\alpha(\alpha-1)}.
\]
We direct the reader to \cite{Ren61, Csi63,morimoto1963markov, AS66} for the origins of the study of $f$-divergences, and to \cite{sason2016f, polyanskiy2025information} for background on inequalities between $f$-divergences.  We mention that comparisons between two $f$-divergences are completely understood, see \cite{harremoes2011pairs}.  However under restrictions of the class of variables, improvements of the general bounds are possible, for example \cite{melbourne2020strongly}.
The following result says that the distance to Gaussianity of a strongly log-concave variable in $\alpha$-R\'enyi divergence \cite{van2014renyi} is, up to a sharp constant, independent of $\alpha \in (0,\infty)$.  This is studied in applications as well.  The special case of Hellinger and KL divergences \cite{grunwald2020fast, wong1995probability}.  The general case is assumed \cite{khribch2024convergence}. 
 \begin{theorem}
  For $0<\alpha < \beta < \infty$, $X$ a random variable with a log-concave density with respect to a Gaussian measure $\gamma$, then $Z \sim \gamma$ implies
     \[
        D_\alpha(X||Z) \leq D_\beta(X||Z) \leq \frac{\beta}{\alpha} D_\alpha(X||Z).
     \]
     {\color{black}with equality in the second inequality iff $X$ is a translation of $Z$}
 \end{theorem}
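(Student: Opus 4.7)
The plan is to encode the R\'enyi divergence as a secant slope of a convex function through a fixed point and then invoke Corollary \ref{cor: convex function of Gaussian}. First I would write the log-concavity hypothesis as $\frac{d\mu_X}{d\gamma}=e^{-h}$ for a convex $h:\mathbb{R}^n\to\mathbb{R}\cup\{+\infty\}$, set $W\coloneqq h(Z)$ with $Z\sim\gamma$, and unpack
\[
  D_\alpha(X\|Z) \;=\; \frac{1}{\alpha-1}\log\mathbb{E}\,e^{-\alpha h(Z)} \;=\; \frac{\alpha}{1-\alpha}\,\Lambda(-\alpha),
  \qquad \Lambda(p)\coloneqq\frac{1}{p}\log\mathbb{E}\,e^{pW}.
\]
By Corollary \ref{cor: convex function of Gaussian}, $\Lambda$ is convex, and strictly so unless $h$ is affine. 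The probability-mass constraint $\mathbb{E}\,e^{-h(Z)}=1$ is exactly $\Lambda(-1)=0$, anchoring the graph of $\Lambda$ at the point $(-1,0)$.

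The lower bound $D_\alpha\leq D_\beta$ is the classical monotonicity of R\'enyi divergence in its order (via Jensen) and uses no log-concavity. For the upper bound I would subtract and reorganize:
\[
  D_\beta - \tfrac{\beta}{\alpha}D_\alpha \;=\; \beta\left(\frac{\Lambda(-\beta)-\Lambda(-1)}{-\beta-(-1)} \;-\; \frac{\Lambda(-\alpha)-\Lambda(-1)}{-\alpha-(-1)}\right),
\]
so the claim is precisely the monotonicity in $x$ of the secant slope $x\mapsto \tfrac{\Lambda(x)-\Lambda(-1)}{x-(-1)}$ of the convex function $\Lambda$ through the fixed point $(-1,0)$, evaluated at $x=-\beta < x=-\alpha$ (which holds because $0<\alpha<\beta$). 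The endpoints $\alpha=1$ or $\beta=1$ (Kullback--Leibler) would be handled by continuity in the parameter.

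For the equality case, equality in the displayed inequality forces three distinct collinear points $(-1,0),(-\alpha,\Lambda(-\alpha)),(-\beta,\Lambda(-\beta))$ on the graph of the convex $\Lambda$, and a standard convex-analytic argument then forces $\Lambda$ to be affine on the interval they span. This violates global strict convexity, so by Theorem \ref{thm: main generalized} we must have $W=h(Z)$ Gaussian, and Theorem \ref{thm: convex functions preserving Gaussianity are affine} then forces $h$ itself to be affine, say $h(x)=\langle a,x\rangle+b$. The normalization $\mathbb{E}\,e^{-h(Z)}=1$ pins $b=|a|^2/2$ via the Gaussian MGF, and completing the square identifies $\mu_X$ with $N(-a,I_n)$, i.e.\ $X$ is the translate $Z-a$ of $Z$. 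The converse is the direct computation that any such translate makes $h$ affine, hence $\Lambda$ affine, hence equality throughout.

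The hard part will be cleanly extracting the equality case: converting ``equality in one secant-slope comparison'' into ``$\Lambda$ affine on a nondegenerate interval'' (which is then a violation of global strict convexity), chaining Theorems \ref{thm: main generalized} and \ref{thm: convex functions preserving Gaussianity are affine} to pull the conclusion back to $h$, and finally translating ``$h$ affine with the correct normalization'' into ``$X$ is a translation of $Z$.''
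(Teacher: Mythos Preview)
Your proposal is correct and follows essentially the same route as the paper: both rewrite $D_\alpha(X\|Z)$ as $\tfrac{\alpha-1}{\alpha}$ times the normalized log moment generating function of $\log\tfrac{d\mu}{d\gamma}$, use the normalization $\int \tfrac{d\mu}{d\gamma}\,d\gamma=1$ to anchor the graph at the point where the argument equals $\pm 1$, and then invoke the (strict) convexity/concavity from Corollary~\ref{cor: convex function of Gaussian}. Your secant-slope phrasing is just a repackaging of the paper's ``write $\alpha$ as a convex combination of $1$ and $\beta$'' step (and in fact handles the case $\alpha<1<\beta$ a bit more cleanly); the equality analysis via Theorems~\ref{thm: main generalized} and \ref{thm: convex functions preserving Gaussianity are affine} is identical.
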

We observe that the structure of the extremizers of the difference between $\alpha$ and $\beta$-R\'enyi divergences to a standard Gaussian among strongly log-concave random variables is much more rigid than its Euclidean counterpart.  Note that through a symmetry for $\alpha \in (0,1)$ one has $D_\alpha(X||Z) = \frac{\alpha}{1-\alpha} D_{1-\alpha}(X||Z)$ Thus for $0 <\alpha < \beta < 1$,
\begin{align*}
    D_\beta(X||Z) 
        &= 
            \frac{\beta}{1-\beta}D_{1-\beta}(Z||X)
                \\
        &\leq 
            \frac{\beta}{1-\beta}D_{1-\alpha}(Z||X)
                \\
        &=
            \frac{\beta}{\alpha}\frac{1-\alpha}{1-\beta} D_\alpha(X||Z).
\end{align*}
we see that all $\alpha,\beta \in (0,1)$ the R\'enyi divergences are equivalent in general.  However, this does not give any access to the KL-divergence case when $\beta =1$ as such divergences are in general not equivalent for $\beta \geq 1$.  
 \begin{proof}
     The first inequality is standard and an immediate consequence of the Jensen's inequality applied to the convexity of $x \mapsto x^{\frac \beta \alpha}.$  Now if $X \sim \mu$, is such that $\frac{d\mu}{d\gamma}$ is log-concave, then
     \[
        \frac{d\mu}{d\gamma}  = e^f
     \]
     for some concave function $f$.  By Corollary \ref{cor: convex function of Gaussian}  the map
     \[
        p \mapsto \frac 1p \log \int_E e^{pf}  d\gamma 
     \]
     is concave, and strictly so unless $f(Z)$ is a Gaussian, which by Theorem \ref{thm: convex functions preserving Gaussianity are affine} is strictly concave unless $f$ is an affine function. Moreover for $p \neq 1$ we have
     \[
     \frac 1p \log \int_E e^{pf}  d\gamma = \frac{p-1}{p} \frac{\log \int \left(\frac{d\mu}{d\gamma}\right)^{p} d\gamma}{p-1} = \frac{p-1}{p} D_p(X ||Z)
     \]
     Writing $\alpha = (1-t) + t \beta$ where $t = \frac{\alpha -1}{\beta -1}$, we have
     \[
        \frac 1 \alpha \log \int_E e^{\alpha f}  d\gamma \geq \frac{1-t}{1} \log  \int_E \frac{d\mu}{d\gamma} d\gamma + \frac{t}{\beta} \int_E \left( \frac{d\mu}{d\gamma} \right)^{\beta} d\gamma = \frac{t}{\beta} \int_E \left( \frac{d\mu}{d\gamma} \right)^{\beta} d\gamma.
     \]
     Rewriting this in terms of the R\'enyi divergence our result follows for $\alpha, \beta \neq 1$.  One can conclude for the $KL$-divergence through continuity.  For the equality case, as mentioned by Corollary \ref{cor: convex function of Gaussian} combined with Theorem \ref{thm: convex functions preserving Gaussianity are affine} we have equality if and only if $f$ is an affine function, say $f(x) = \langle a, x \rangle + b$ in which case writing $\frac{d\mu}{dx} = e^{-V}$
     \[
        \langle a, x \rangle + b = \log \frac{d\mu}{d\gamma} = {\frac{|x|^2}{2} - V}.
     \]
     Thus we have equality if and only if $V(x) = - \frac{|x|^2}{2} + \langle a, x \rangle - b$ which is if and only if $X$ is a translation of $Z$.
 \end{proof}
 
 Note that in particular we have
 \[
    D_{2}(X||Z) \leq 2 D(X||Z) \leq 4 D_{ 1 /2}( X||Z)
 \]
which is 
\[
       \log \left( 1 + \chi^2(X;Z) \right) \leq 2 D(X||Z) \leq  -  8 \log \left( 1 - \frac{ \mathcal{H}el^{2}(X;Z)}{2} \right) 
\]

 \section{Conic Intrinsic Volumes} \label{sec: conic intrinsic volumes}
 
% Thanks to Maite, we know that $\| \Pi_C(x) \|^2$ is a convex function, where $\Pi_C$ is the projection to a closed convex cone $C$.  The concentration of intrinsic volumes is derived from bounds on the moment generating function of $\| \Pi_C(x) \|^2$ in \cite{mccoy2014steiner}.  We should have better (or at least different) bounds on said moment generating function. This should give us an angle to improve the concentration of Conic Intrinsic Volumes \cite{mccoy2014steiner, amelunxen2014living}, and hence better understanding of the threshold behavior of some algorithms.
The study of conic intrinsic volumes associated to convex cone, is a classical subject in convex geometry, going back at least to Sommerville \cite{sommerville1927relations}.  However, there has been recent surge of interest in the subject due to the concentration of such objects giving a rigorous explanation for the observed phase transitions in convex optimization problems, see \cite{amelunxen2014living}.  Further improvements of said concentration bounds can be found in \cite{mccoy2014steiner} and their typically Gaussian behavior in large dimension is proven in \cite{goldstein2017gaussian}. Our contribution below shows that conic intrinsic volume random variables are sub-Gaussian with a variance proxy determined in the lower tail by variance of the norm squared of a standard Gaussian projected to the cone, while its upper tail is controlled by the analogous construction projected to the dual cone.  This mirrors the sub-Gaussian concentration results achieved for intrinsic volumes associated to convex bodies \cite{LMNPT18,AMM22,marsiglietti2022moments,lotz2025sharp}

For $C$ a convex polyhedral cone in $\mathbb{R}^n$ and $\Pi_C$ the projection to said cone, let $F_i$ be the faces of $C$.  Then the sequence of (conic) intrinsic volumes are defined by
\[
    v_k \coloneqq v_k(C) \coloneqq \mathbb{P}(\Pi_C(Z) \in rel int (F_i), dim(F_i) = k )
\]
for $Z$ a standard $\mathbb{R}^n$ valued random vector.  For a general cone $C$ such that $C_n \to C$ in the ``conic Haussdorff metric'', then $v_k(C) \coloneqq \lim_{n \to \infty} v_k(C_n)$
we let $V_C$ denote a random variable such that $\mathbb{P}(V_C = k ) = v_k$.  We denote the dual cone associated to $C$ by $C^\circ \coloneqq \{ x \in \mathbb{R}^n : \langle x, y \rangle \leq 0,  \forall y \in C \}$

\begin{lemma}
    For $V_C$ a conic intrinsic volume random variable associated to a convex cone $C \subseteq \mathbb{R}^n$, $Z$ a standard $\mathbb{R}^n$ valued Gaussian random vector, y $\Pi_C(x) = \arg \min_{y \in C} | x - y | $, 
    \begin{align} \label{eq: statistical dimension}
        \delta(C) \coloneqq \mathbb{E} V_C = \mathbb{E} | \Pi_C(Z)|^2,
    \end{align}
    \[
        v_k(C^\circ) = v_{n-k}(C),
    \]
    in particular $\mathbb{E} V_C = n - \mathbb{E}V_{C^\circ}$
    \[
        \var(V_C) = \var( |\Pi_C(Z)|^2 ) - 2 \delta(C) = \var( | \Pi_{C^{\circ}}(Z)|^2) - 2 \delta( C^\circ) = \var(V_{C^\circ})
    \]
    for $\eta \in \mathbb{R}$ y $\xi = \frac 1 2 (1 - e^{-2 \eta})$, so that $\eta = - \frac 1 2 \log ( 1 - 2 \xi)$ we have
    \begin{align} \label{eq: MGF representation}
        \mathbb{E} e^{\eta V_C} = \mathbb{E} e^{\xi | \Pi_C(Z)|^2}.
    \end{align}
   Additionally,  each $x\in \mathbb{R}^n$ can be  decomposed as an orthogonal sum of  the two projections to $C$ and $C^\circ$
    \begin{align} \label{eq: decomp Polars}
         \Pi_C(x) + \Pi_{C^\circ}(x)  = x
    \end{align}
\end{lemma}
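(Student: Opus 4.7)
The plan is to derive all six identities from two well-known structural facts --- Moreau's orthogonal decomposition theorem and the conditional chi-squared distribution coming from the Master Steiner formula of McCoy--Tropp --- from which the rest is algebraic bookkeeping. Identity \eqref{eq: decomp Polars} is precisely Moreau's theorem: the KKT optimality conditions for projection onto the closed convex cone $C$ force $\Pi_C(x) \in C$, $x - \Pi_C(x) \in C^\circ$, and $\langle \Pi_C(x), x - \Pi_C(x)\rangle = 0$, and an identical computation identifies $x - \Pi_C(x)$ with $\Pi_{C^\circ}(x)$. This is purely geometric and holds pointwise in $\mathbb{R}^n$.

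For the polar relation $v_k(C^\circ) = v_{n-k}(C)$, by continuity in the conic Hausdorff metric one may assume $C$ polyhedral. Then, for each $k$-dimensional face $F$ of $C$, Moreau's decomposition identifies $\{x : \Pi_C(x) \in \text{relint}(F)\}$ with $\text{relint}(F) \oplus \text{relint}(N_F)$, where the normal cone $N_F$ is a face of $C^\circ$ of dimension $n-k$, and $F \mapsto N_F$ is a bijection between $k$-faces of $C$ and $(n-k)$-faces of $C^\circ$. Summing Gaussian mass yields $v_k(C) = v_{n-k}(C^\circ)$, hence $V_{C^\circ} \stackrel{d}{=} n - V_C$, so $\mathbb{E}V_{C^\circ} = n - \delta(C)$ and $\var(V_{C^\circ}) = \var(V_C)$ at once.

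The probabilistic heart is the conditional chi-squared identity: given $\Pi_C(Z) \in \text{relint}(F)$ with $\dim F = k$, the variable $|\Pi_C(Z)|^2$ is $\chi_k^2$-distributed. Splitting $\mathbb{R}^n$ orthogonally into $\text{span}(F)$ and its complement and using rotational invariance of $\gamma_n$ on each factor, Moreau identifies the conditional distribution as Gaussian on $\text{span}(F)$, whose squared norm is $\chi_k^2$. With this in hand the remaining identities are immediate: using $\mathbb{E}\chi_k^2 = k$ and the $\chi_k^2$ moment generating function $(1-2\xi)^{-k/2}$,
\begin{align*}
\mathbb{E}|\Pi_C(Z)|^2 &= \sum_k v_k \cdot k = \mathbb{E} V_C, \\
\mathbb{E} e^{\xi |\Pi_C(Z)|^2} &= \sum_k v_k (1-2\xi)^{-k/2} = \sum_k v_k e^{\eta k} = \mathbb{E} e^{\eta V_C}
\end{align*}
under the substitution $e^{-2\eta} = 1-2\xi$, i.e.\ $\xi = \tfrac{1}{2}(1-e^{-2\eta})$; and finally $\mathbb{E}|\Pi_C(Z)|^4 = \sum_k v_k (k^2+2k) = \mathbb{E} V_C^2 + 2\delta(C)$ yields $\var(|\Pi_C(Z)|^2) = \var(V_C) + 2\delta(C)$, with the $C^\circ$ version identical.

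The main obstacle is the conditional chi-squared step, which bundles nontrivial polyhedral geometry (the wedge structure of the region projecting to a given face, the normal-cone bijection, and Gaussian rotational invariance adapted to each face's linear span); the cleanest route is to cite the Master Steiner formula rather than unpack it. Everything else reduces to Moreau's decomposition and short generating-function manipulations.
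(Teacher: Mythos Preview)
Your proof is correct and follows the standard route (Moreau's decomposition plus the conditional $\chi_k^2$ law underlying the Master Steiner formula of McCoy--Tropp). Note, however, that the paper does not actually prove this lemma: it is stated without proof as known background, with the implicit references being \cite{amelunxen2014living} and \cite{mccoy2014steiner}. So there is no ``paper's own proof'' to compare against; your argument simply supplies what the paper omits, and it does so along exactly the lines of those references.
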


\begin{lemma} \label{lem: norm of projection to convex cone is convex}
    For $C$ a convex cone, and $\Pi_C(x) = \arg \min_{y \in C} |x - y|$,
    \[
        x \mapsto |\Pi_C(x) |^2
    \]
    is a convex function.
\end{lemma}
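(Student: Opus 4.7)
The plan is to represent $|\Pi_C(x)|^2$ as a pointwise supremum of affine functions of $x$, which makes convexity immediate. The key algebraic identity is $|x|^2 - |x - y|^2 = 2\langle x, y\rangle - |y|^2$, so for each fixed $y \in C$ the map $x \mapsto 2\langle x, y\rangle - |y|^2$ is affine in $x$. Taking the supremum over $y \in C$ and applying the definition of the metric projection yields
\[
\sup_{y \in C}\bigl(2\langle x, y\rangle - |y|^2\bigr) \;=\; |x|^2 - \inf_{y \in C}|x-y|^2 \;=\; |x|^2 - |x - \Pi_C(x)|^2.
\]

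The next step is to identify this right-hand side with $|\Pi_C(x)|^2$, and here the cone structure enters through the Moreau-type orthogonal decomposition $x = \Pi_C(x) + \Pi_{C^\circ}(x)$ already recorded in the preceding lemma. Orthogonality of the two summands gives simultaneously $|x|^2 = |\Pi_C(x)|^2 + |\Pi_{C^\circ}(x)|^2$ and $|x - \Pi_C(x)|^2 = |\Pi_{C^\circ}(x)|^2$, so the two $|\Pi_{C^\circ}(x)|^2$ terms cancel and we obtain $|x|^2 - |x - \Pi_C(x)|^2 = |\Pi_C(x)|^2$. Chaining this with the supremum identity above exhibits
\[
|\Pi_C(x)|^2 \;=\; \sup_{y \in C}\bigl(2\langle x, y\rangle - |y|^2\bigr),
\]
a supremum of affine functions of $x$, hence convex.

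The only genuinely cone-specific input is the orthogonality underlying the Moreau decomposition; without it, the same supremum identity only gives convexity of $|x|^2 - |x - \Pi_C(x)|^2$ and one cannot strip off the $|x|^2$ to isolate $|\Pi_C(x)|^2$. Since that decomposition has already been invoked in the previous lemma, I expect no technical obstacle, and the write-up should close in a few lines. An alternative route would be to bound the difference quotient $\lambda |\Pi_C(x)|^2 + (1-\lambda)|\Pi_C(y)|^2 - |\Pi_C(\lambda x + (1-\lambda) y)|^2$ directly using the variational characterization of $\Pi_C$, but the supremum-of-affines approach is cleaner and avoids ad hoc estimates.
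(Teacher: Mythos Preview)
Your proof is correct, but it takes a different route from the paper's. The paper argues directly: it writes $|\Pi_C(z)|^2 = |z - \Pi_{C^\circ}(z)|^2$ via the Moreau decomposition, then for $z = (1-t)x + ty$ bounds this by $|(1-t)(x-\Pi_{C^\circ}(x)) + t(y-\Pi_{C^\circ}(y))|^2$ using only that $(1-t)\Pi_{C^\circ}(x)+t\Pi_{C^\circ}(y)$ lies in the convex set $C^\circ$, and finishes with the convexity of $|\cdot|^2$. In particular the paper uses the decomposition $x = \Pi_C(x)+\Pi_{C^\circ}(x)$ but never the orthogonality of the summands. Your argument instead exploits the orthogonality to obtain the closed-form identity $|\Pi_C(x)|^2 = \sup_{y\in C}\bigl(2\langle x,y\rangle - |y|^2\bigr)$, after which convexity is immediate as a pointwise supremum of affine functions. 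Your approach is arguably cleaner and has the pleasant side effect of exhibiting $|\Pi_C(\cdot)|^2$ in exactly the form $2 f_K$ (with $K=C$) that reappears in Section~\ref{sec: Wills}; the paper's approach is slightly more economical in that it needs one fewer fact about the Moreau decomposition.
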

\begin{proof}
    Taking $t \in (0,1)$ and $x,y \in \mathbb{R}^n$, by \eqref{eq: decomp Polars},
    \begin{align*}
        | \Pi_C ((1-t) x + ty ) |^2
            &=
                | (1-t) x + t y - \Pi_{C^\circ}((1-t) x + ty )|^2.
    \end{align*}
    By the convexity of $C^\circ$, $(1-t) \Pi_{C^\circ}(x) + \Pi_{C^\circ}(y) \in C^\circ$, and hence by the definition of the projection,
    \begin{align*}
        | (1-t) x + t y - \Pi_{C^\circ}((1-t) x + ty |^2 
            &\leq
                | (1-t) x + t y - ((1-t) \Pi_{C^\circ}(x) + t\Pi_{C^\circ}(y)) |^2
                    \\
            &=
                | (1-t) (x-\Pi_{C^\circ}(x)) + t (y -\Pi_{C^\circ}(y)) |^2
                    \\
            &\leq
                (1-t) | x - \Pi_{C^\circ}(x) |^2 + t |y - \Pi_{C^\circ}(y) |^2
                    \\
            &=
                (1-t) |\Pi_{C}(x) |^2 + t | \Pi_{C}(y) |^2
    \end{align*}
\end{proof}
\begin{theo}
For a convex cone $C$ and $V_C$ a corresponding conic intrinsic volume random variable with statistical dimension $\delta(C) = \mathbb{E} V_C$, and $ t \leq  \delta(C),$
\[
    \mathbb{P}( V_C \leq \delta(C) - t) \leq \exp \left\{- \frac{t^2}{2\sigma^2} \right\}
\]
where $\sigma^2 = \var( | \Pi_C |^2) = \var(V_C) + 2 \delta(C)$
\end{theo}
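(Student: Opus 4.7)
The plan is to combine the convexity of $|\Pi_C|^2$ from Lemma \ref{lem: norm of projection to convex cone is convex}, the moment generating function identity \eqref{eq: MGF representation}, and a Chernoff argument in which the optimization parameter is chosen on the ``projection side'' rather than on $V_C$ directly.

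First I would set $W \coloneqq |\Pi_C(Z)|^2$. Since $W = f(Z)$ for the convex function $f = |\Pi_C|^2$, Corollary \ref{cor: convex function of Gaussian} shows that $\Lambda_W$ is convex on $\mathbb{R}$, and hence the general sub-Gaussian bound derived in Section \ref{sec: deviation bounds and characterization} yields, for every $\xi \leq 0$,
\[
    \log \mathbb{E} e^{\xi W} \leq \xi\, \delta(C) + \frac{\xi^2 \sigma^2}{2},
\]
where $\sigma^2 = \var(W) = \var(V_C) + 2\delta(C)$ by the preceding lemma.

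Next I would translate this into a bound for the lower tail of $V_C$. For $\eta \leq 0$ and $\xi = \tfrac{1}{2}(1 - e^{-2\eta}) \leq 0$, Markov's inequality combined with \eqref{eq: MGF representation} and the display above gives
\[
    \log \mathbb{P}( V_C \leq \delta(C) - t ) \leq \eta t + (\xi - \eta)\, \delta(C) + \frac{\xi^2 \sigma^2}{2}.
\]
Rather than optimize directly in $\eta$, I would fix $\xi^* = -t/\sigma^2$, which forces $\eta^* = -\tfrac{1}{2}\log(1 + 2t/\sigma^2)$. The elementary inequality $\log(1+x) \leq x$ delivers $|\eta^*| \leq |\xi^*| = t/\sigma^2$, and the hypothesis $t \leq \delta(C)$ then gives $(|\xi^*| - |\eta^*|)(\delta(C) - t) \geq 0$; substituting back and rearranging collapses the right hand side to exactly $-t^2/(2\sigma^2)$, as desired.

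The main obstacle is the nonlinear mismatch between $\eta$ and $\xi$ in the MGF identity. A naive Chernoff at the ``sub-Gaussian optimal'' value $\eta = -t/\sigma^2$ produces a $\xi$ with $|\xi| > |\eta|$, inflating the quadratic term $\xi^2 \sigma^2/2$ beyond the target; one must instead optimize on the $W$ side, where the sub-Gaussian bound is tight. The role of the hypothesis $t \leq \delta(C)$ is then precisely to ensure that the cross term $(\xi - \eta)\, \delta(C)$ produced by the change of parameter contributes in our favor rather than against us.
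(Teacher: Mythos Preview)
Your proof is correct and is essentially the same argument as the paper's: Markov on $e^{\eta V_C}$, the MGF identity \eqref{eq: MGF representation} to pass to $W=|\Pi_C(Z)|^2$, the sub-Gaussian bound \eqref{eq: MGF bound under convexity} on $W$, and then the elementary estimate $\log(1+x)\leq x$ together with $t\leq\delta(C)$ to reduce to $\frac{\xi^2}{2}\sigma^2+\xi t$ and optimize at $\xi=-t/\sigma^2$. The only cosmetic difference is that the paper writes the cross term as $\frac{\log(1-2\xi)}{2}(\delta(C)-t)$ and bounds it directly, whereas you phrase the same step as $(|\xi^\ast|-|\eta^\ast|)(\delta(C)-t)\geq 0$; these are the same inequality. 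One small wording point: the right-hand side does not ``collapse to exactly $-t^2/(2\sigma^2)$'' but is bounded above by it.
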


\begin{proof}
    For $\eta < 0$, applying Markov's inequality,
    \begin{align*}
        \mathbb{P}(V_C \leq \delta(C) - t)
            &=
                \mathbb{P}( e^{\eta V_C} \geq e^{ \eta(\delta(C) -t) })
                    \\
            &\leq
                \mathbb{E} e^{\eta V_C} e^{\eta(t -\delta(C) )}
                    \\
            &=
                \mathbb{E} e^{ \xi | \Pi_C(Z) |^2}e^{\eta (t -\delta(C) )}
    \end{align*}
    with $\xi = \frac 1 2 (1 - e^{-2 \eta} )$ so that $\eta = - \frac{1}{2}\log(1 - 2 \xi)$ coming from \eqref{eq: MGF representation}.  Note that $\xi \leq 0$ so that using the covexity of $|\Pi_C(x) |^2$  from \eqref{lem: norm of projection to convex cone is convex}
    to apply \eqref{eq: MGF bound under convexity},
    we have
    \begin{align*}
        \mathbb{P}(V_C \leq \delta(C) - t) &\leq \exp \left\{ \frac{\xi^2}{2} \var( | \Pi_C(Z) |^2) + \xi \mathbb{E} |\Pi_C(Z)|^2 + \frac{\log \left( 1 - 2 \xi \right)}{2} (\delta(C) - t) \right\}
            \\
        &\leq \exp \left\{ \frac{\xi^2}{2} \var( | \Pi_C(Z) |^2) + \xi \delta(C) -  \xi (\delta(C) - t) \right\}
            \\
             &= \exp \left\{ \frac{\xi^2}{2} \var( | \Pi_C(Z) |^2) +\xi t \right\}
    \end{align*}
    where the second inequality is from $\log (1 + x) \leq x$ and the equality \eqref{eq: statistical dimension}, and then algebra in the last line.
    Taking $\xi = - \frac{t}{\var( |\Pi_C |^2) }$
    we arrive at 
    \[
        \mathbb{P}(V_C \leq \delta(C) - t) \leq \exp \left\{ \frac{- t^2}{2\var( | \Pi_C(Z) |^2)}   \right\}
    \]
    % Given $f(\lambda) = \frac{A\lambda^2}{2} - \lambda B + C \frac{\log (1 + 2\lambda)}{2}$,
    % \[
    %     f'(\lambda) = A \lambda - B +\frac{C}{1 + 2 \lambda}
    % \] 
    % so $f'(\lambda) = 0$ for $\lambda \geq 0$ only when
    % \begin{align*}
    %     0 = (1+ 2\lambda) f'(\lambda) 
    %         &= 
    %             A (\lambda +2\lambda^2) - B - 2 B \lambda +C
    %                 \\
    %         &=
    %             2 A \lambda^2 + (A - 2B)\lambda + (C-B)
    % \end{align*}
    % Thus,
    % \[
    %     \lambda = \frac{ (2B-A) \pm \sqrt{(2B-A)^2 - 8A(C-B)}}{4 A}
    % \]
\end{proof}

For a converse inequality,
\[
    \mathbb{P}(V_C \geq \delta(C) + t) = \mathbb{P}(n - V_{C^\circ} \geq n -\delta(C^\circ ) + t ) = \mathbb{P}(V_{C^\circ} \leq \delta(C^\circ) - t) \leq e^{- \frac{t^2}{2 \var(| \Pi_{C^\circ}(Z) |^2} }
\]
Note that $\var (| \Pi_{C^\circ}(Z) |^2) = \var(V_C) + 2 (n - \delta(C))$

Note for example if $V_C$ is symmetric so that $\delta(C) = \frac n 2$ we have
\[
    \mathbb{P}( |V_C - \delta(C)| \geq t) \leq 2 \exp \left\{ - \frac{t^2}{\var(V_C) + n } \right\} 
\]
The following states that all conic intrinsic volume random variables are sub-Gaussian.
\begin{coro}
    For $C$ a convex cone and $V_C$ the associated intrinsic volume random variable,
    \[
        \mathbb{P}(|V_C - \delta(C)| \geq t ) \leq 2 e^{- \frac{t^2}{2 \sigma^2}}
    \]
    with sub-Gaussian variance proxy $\sigma^2 \coloneqq\var(V_C) +  2 \max \{ \delta(C), n - \delta(C)\}$.
\end{coro}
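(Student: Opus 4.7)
The plan is to simply assemble the two one-sided tail bounds that were established in the discussion immediately preceding the corollary into a single two-sided sub-Gaussian bound, by replacing each of the two distinct variance proxies with their common upper bound $\sigma^2$ and then applying a union bound. No new analytic ingredient is required beyond what is already in the excerpt.

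Concretely, the theorem just above the corollary gives the lower tail
\[
    \mathbb{P}(V_C \leq \delta(C) - t) \leq \exp\!\left\{-\frac{t^2}{2(\var(V_C) + 2\delta(C))}\right\},
\]
and the short paragraph after its proof produces the matching upper tail by passing to the dual cone. Indeed, from $v_k(C^\circ) = v_{n-k}(C)$ we get $V_C \stackrel{d}{=} n - V_{C^\circ}$, so $\delta(C^\circ) = n - \delta(C)$ and $\var(V_{C^\circ}) = \var(V_C)$; applying the lower-tail inequality to $V_{C^\circ}$ therefore yields
\[
    \mathbb{P}(V_C \geq \delta(C) + t) = \mathbb{P}(V_{C^\circ} \leq \delta(C^\circ) - t) \leq \exp\!\left\{-\frac{t^2}{2(\var(V_C) + 2(n - \delta(C)))}\right\}.
\]

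Now I would observe that, since $\sigma^2 = \var(V_C) + 2\max\{\delta(C),\, n - \delta(C)\}$ is at least as large as each of the two variance proxies above, and since the bounds are monotone decreasing in the proxy, each one-sided probability is bounded by $e^{-t^2/(2\sigma^2)}$. A union bound then gives
\[
    \mathbb{P}(|V_C - \delta(C)| \geq t) \leq \mathbb{P}(V_C \leq \delta(C) - t) + \mathbb{P}(V_C \geq \delta(C) + t) \leq 2 e^{-t^2/(2\sigma^2)},
\]
which is precisely the claim.

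There is essentially no obstacle; the real work was done in the preceding theorem (where the convexity of $x \mapsto |\Pi_C(x)|^2$ is combined with the MGF identity \eqref{eq: MGF representation} and the convexity bound \eqref{eq: MGF bound under convexity}). The only ``care'' step is the symmetry argument via the dual cone, but this is immediate from the identity $v_k(C^\circ) = v_{n-k}(C)$ recorded in the earlier lemma, which simultaneously supplies $\var(V_{C^\circ}) = \var(V_C)$ and $\delta(C^\circ) = n - \delta(C)$ so that a single variance term $\var(V_C)$ appears in both one-sided bounds.
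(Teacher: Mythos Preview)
Your proposal is correct and matches the paper's approach: the paper establishes the lower-tail bound in the preceding theorem, derives the upper-tail bound by passing to the dual cone in the paragraph that follows, and then states the corollary without further proof, the union bound being implicit. Your write-up makes explicit precisely the assembly the paper leaves to the reader.
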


We note that unlike the Euclidean intrinsic volumes where the Alexandrov-Fenchel inequality delivers immediate concavity properties, and a host of concentration, moment, and entropy bounds can be immediately obtained via general inequalities for log-concave sequences, see for instance, \cite{BMM20,alqasem2024conjecture, jakimiuk2024log, gaxiola2025anti}, it is not known if, but conjectured in \cite{amelunxen2011geometric} that the conic intrinsic volumes are log-concave.  Regarding the concentration inequalities obtained here, an improvement of constants based on log-concavity properties, would follow if the conic intrinsic volume sequences could for instance, be proven to be ultra log-concave of finite order, see \cite{pemantle2000towards, marsiglietti2022moments}.

\section{A Wills Functional Inequality} \label{sec: Wills}
For a convex set $K \subseteq \mathbb{R}^n$ and $\lambda \geq 0$, the Steiner formula states that the expansion of $K$ by $\lambda$ dilations of $B = \{ x \in \mathbb{R}^n : |x|_2 \leq 1 \}$ is a polynomial in $\lambda$,
\[
    \vol(K + \lambda B) = \sum_{i = 0}^n \binom{n}{i} W_i(A) \lambda^i.
\]
The coefficients $W_i(A)$ are a special case of quermassintegrals which play a central role in convex geometry \cite{schneider2014convex}, however they are dependent on the dimension of the Euclidean space within which $A$ is embedded.  This was addressed in McMullen \cite{mcmullen1975non} who introduced the intrinsic volumes associated to a convex  set $A$, defined by
\[
    V_i(K) \coloneqq \frac{\binom{n}{i}}{\omega_{n-i}} W_{n-i}(K)
\]
for $i \in \{0,1,\dots, n\}$, where $\omega_k$ is the volume of the $k$-dimensional unit ball, explicitly,
\[
\omega_k = \frac{\pi^{\frac k 2}}{\Gamma\left(\frac k 2 + 1 \right)}.
\]
Of particular importance for our investigations here, is $V_1(K)$ which has the following representation, see \cite{schneider2014convex},
\begin{align} \label{eq: V1 representation}
    V_1(K) = \frac{1}{\omega_{n-1}} \int_{\mathbb{S}^{n-1}} h_K(\theta) d\sigma(\theta)
\end{align}
where $h_K$ denotes the support function of $K$, $h_K(\theta) = \sup \{ \langle \theta, y\rangle : y \in K\}$ and $\sigma$ denotes the Lebesgue measure on the unit sphere $\mathbb{S}^{n-1}$.
The sum of all intrinsic volumes is called the Wills functional \cite{wills1973gitterpunktanzahl},
\[
    W(K) \coloneqq \sum_{i=0}^n V_i(K).
\]
In \cite{Had79}, Hadwiger showed that the Wills functional had the following equivalent form, which we take as its definition.
\begin{defn}
    For convex $K \subseteq \mathbb{R}^n$,
    \[
        W(K) \coloneqq \int_{\mathbb{R}^n} e^{- \pi d^2(x,K)} dx
    \]
    where
    \[
        d(x,K) \coloneqq \inf_{y \in K} |x-y|.
    \]
\end{defn}
%The Wills functional has several other representations, of particular significance is that $W(K) = \sum_{i=0}^n V_i(K)$ where $V_i(K)$ is the $i$-th intrinsic volume associated to the body $K$,
More background on intrinsic volumes can be found in \cite{Sch14:book}, the reader can consult \cite{alonso2021further} for more on the Wills functional.  
Following Vitale \cite{vitale1996wills}, we observe that the subsitution $w = \sqrt{2 \pi} \ x $ gives 
\begin{align*}
    \int_{\mathbb{R}^n} e^{- \pi d^2(x,K)} dx
        &=
            \int_{\mathbb{R}^n} \exp \left\{ \sup_{y \in K} \left( - \pi | x - y|^2 \right) \right\} dx
                \\
        &=
            \int_{\mathbb{R}^n} \exp \left\{ 2\pi  \sup_{y \in K} \left(   \langle x, y \rangle  - \frac{|y|^2}{2} \right) \right\} e^{- \pi |x|^2 }dx
                \\
        &=
            \int_{\mathbb{R}^n}\exp \left\{  \sup_{y \in K} \left(   \langle x, \sqrt{2 \pi }\  y \rangle  - \frac{|\sqrt{2 \pi } \ y|^2}{2} \right) \right\} e^{-  \frac{|x|^2}{2} }dx
                \\
        &=
            \mathbb{E} \exp \left \{ \sup_{y \in \sqrt{2 \pi} \ K} \left( \langle Z, y \rangle - \frac{ |y|^2}{2} \right)  \right\},
\end{align*}
where $Z \sim N(0, I_n)$.
Summarizing,
\[
    W \left( {K} \right) = \mathbb{E} e^{f(Z)}
\]
where $f \coloneqq f_K$ is the suprema of the affine functions, and hence a convex function defined by
\[
    f(z) \coloneqq \sup_{y \in \sqrt{2 \pi} K} \left( \langle z , y \rangle - \frac{|y|^2}{2} \right)
\]

\begin{prop} \label{prop: V1 Gaussian}
    For a convex set $K$,
    \[
        V_1(K) = \mathbb{E} \left[ \sup_{y \in \sqrt{2 \pi} \ K} \langle Z, y \rangle \right]
    \]
    for $Z \sim N(0, I_n)$.
\end{prop}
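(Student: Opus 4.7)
The plan is to reduce the right-hand side directly to the spherical-integral formula \eqref{eq: V1 representation} for $V_1(K)$, using positive homogeneity of the support function together with the polar decomposition of a standard Gaussian. First, since $\sqrt{2\pi}\,K$ is obtained by scaling $K$, positive $1$-homogeneity gives
\[
    \sup_{y \in \sqrt{2\pi}\,K} \langle Z, y\rangle \;=\; \sqrt{2\pi}\, h_K(Z) \;=\; \sqrt{2\pi}\, |Z|\, h_K\!\left( \frac{Z}{|Z|} \right),
\]
where we may restrict attention to $Z \neq 0$ since that event has full measure.

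Next I would invoke the standard fact that for $Z \sim N(0,I_n)$ the radial part $|Z|$ (distributed as $\chi_n$) and the angular part $\Theta \coloneqq Z/|Z|$ (uniform on $\mathbb{S}^{n-1}$) are independent. Taking expectations and factoring yields
\[
    \mathbb{E}\!\left[\sup_{y \in \sqrt{2\pi}\,K}\langle Z, y\rangle\right]
    \;=\; \sqrt{2\pi}\,\mathbb{E}|Z|\cdot \mathbb{E}\, h_K(\Theta)
    \;=\; \frac{\sqrt{2\pi}\,\mathbb{E}|Z|}{|\mathbb{S}^{n-1}|}\int_{\mathbb{S}^{n-1}} h_K(\theta)\,d\sigma(\theta).
\]
Comparing with \eqref{eq: V1 representation}, it suffices to verify the constant identity
\[
    \sqrt{2\pi}\,\omega_{n-1}\,\mathbb{E}|Z| \;=\; |\mathbb{S}^{n-1}|.
\]

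The last identity is a routine bookkeeping exercise with the formulas $\mathbb{E}|Z| = \sqrt{2}\,\Gamma((n+1)/2)/\Gamma(n/2)$, $\omega_{n-1} = \pi^{(n-1)/2}/\Gamma((n+1)/2)$, and $|\mathbb{S}^{n-1}| = 2\pi^{n/2}/\Gamma(n/2)$; the $\Gamma$-factors telescope instantly, giving $2\pi^{n/2}/\Gamma(n/2)$ on the left. There is no real obstacle in this proof; the only point requiring care is ensuring that the normalizations chosen for $V_1$, $\omega_{n-1}$, and the spherical measure $\sigma$ are mutually consistent, but these are standard conventions already adopted above.
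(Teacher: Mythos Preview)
Your proof is correct and follows essentially the same route as the paper: polar decomposition of $Z$ into its independent radial and angular parts, factoring the expectation, and then matching the resulting constant against \eqref{eq: V1 representation} via the explicit formulas for $\mathbb{E}|Z|$, $\omega_{n-1}$, and $|\mathbb{S}^{n-1}|$. The only cosmetic difference is that the paper writes the surface area as $n\omega_n$ and uses $\frac{n}{2}\Gamma(n/2)=\Gamma(1+n/2)$ in the final cancellation, whereas you plug in $\omega_{n-1}=\pi^{(n-1)/2}/\Gamma((n+1)/2)$ directly; the verification is the same either way.
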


\begin{proof}
    We will compute directly, using the fact that for $Z$ a standard Gaussian vector, $|Z|$ and $Z/|Z|$ are independent with $|Z|$ $\chi$-distributed of order $n$, and $Z/|Z|$ is uniformly distributed on $\mathbb{S}^{n-1}$.
    \begin{align*}
         \mathbb{E} \left[ \sup_{y \in \sqrt{2 \pi} \ K} \langle Z, y \rangle \right]
            &=
                \sqrt{2 \pi} \mathbb{E} \left[ |Z| \sup_{y \in K} \langle Z/|Z|, y \rangle \right]
                    \\
            &=
                \sqrt{2 \pi} \frac{\mathbb{E}|Z|}{\vol_{n-1}(\mathbb{S}^{n-1})} \int_{\mathbb{S}^{n-1}} h_K(\theta) d\sigma(\theta) 
                    \\
            &=
                \sqrt{2 \pi} \frac{\mathbb{E}|Z| \omega_{n-1}}{\vol_{n-1}(\mathbb{S}^{n-1})} V_1(K),
    \end{align*}
    where we have used \eqref{eq: V1 representation} in the last equality.  Note that $\vol_{n-1}(\mathbb{S}^{n-1}) = n \omega_n$ and $\mathbb{E}|Z| = \sqrt{2} \frac{\Gamma((n+1)/2)}{\Gamma(n/2)}$, using $\frac{n}{2} \Gamma(n/2) = \Gamma(1 + n/2)$ we see that the constants cancel and the proof follows.
\end{proof}

% \begin{defn}
%     For a convex body $K$, the Gaussian width of $K$ is defined by
%     \[
%         w(K) \coloneqq \mathbb{E} \sup_{y \in K} \langle Z, y \rangle
%     \]
% \end{defn}
% Recall, the support function associated to a convex body 
% \[
%     h_K(z) \coloneqq \sup_{y \in K} \langle z,y\rangle
% \]
% For $\sigma$ the normalized Haar measure on the sphere, 
% \[
%     \int_{\mathbb{S}^{n-1}} h_K(\theta) d\sigma(\theta)
% \]
% is the mean width of a convex body $K$, which we denote by $m(K)$.  Recalling that for $Z$ a standard Gaussian, $\frac{Z}{|Z|}$ and $|Z|$ are independent, and $\frac{Z}{|Z|} \sim \sigma$, 
% \[
%     w(K) = \mathbb{E} \sup_{y \in K}\langle Z, y \rangle = \mathbb{E} |Z| \ \mathbb{E}h_K(Z/|Z|) = \mathbb{E}|Z| \ m(K).
% \]
% Thus the mean width and the Gaussian width are equivalent up to constant $\mathbb{E} |Z| = \sqrt{2} \frac{\Gamma( (k+1)/2)}{\Gamma(k/2)}$,

McMullen \cite[Theorem 2.2]{mcmullen1991inequalities} proved that 
\[
    W(K) \leq e^{V_1(K)}.
\]
In what follows we pursue a reversal of this inequality due to \cite{alonso2021further}.  We mention that $V_1(K)$ is, up to a dimensionally dependent constant, equivalent to the Gaussian mean width.  We direct the  interested reader to background and applications of the mean width in \cite{vershynin2015estimation}.
\begin{theo}
    For $K$ a convex body, $f_K(z) = \sup_{y \in \sqrt{2 \pi} K} \left( \langle z , y \rangle - \frac{|y|^2}{2} \right)$
    \[
        \log W(K) \geq \frac{\var( f_K(Z))}{2} + \mathbb{E} f_K(Z)
    \]
\end{theo}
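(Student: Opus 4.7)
The plan is to recognize $W(K) = \mathbb{E}\, e^{f_K(Z)}$ as the moment generating function of $f_K(Z)$ evaluated at $p = 1$, and then apply the convexity of
\[
    \Lambda(p) \coloneqq \frac{1}{p}\log \mathbb{E}\, e^{p f_K(Z)}
\]
established in Corollary \ref{cor: convex function of Gaussian}. The function $f_K$ is a supremum of affine functions, hence convex on $\mathbb{R}^n$, so the corollary does apply with $Z \sim N(0, I_n)$. Since $K$ is a convex body, $f_K(z)$ is controlled by an affine function of $|z|$ and consequently $f_K(Z)$ has finite moments of all orders, so $\Lambda$ is finite in a neighborhood of $0$.

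Next I would invoke the computations preceding \eqref{eq: MGF bound under convexity}, which (after the usual reduction to the centered case $\mathbb{E}\, f_K(Z) = 0$) show that $\Lambda$ is differentiable at $0$ with $\Lambda(0) = \mathbb{E}\, f_K(Z)$ and $\Lambda'(0) = \tfrac{1}{2}\var(f_K(Z))$, via the expansion
\[
    \Lambda(p) - \mathbb{E}\,f_K(Z) \;=\; \frac{1}{p^2}\log\!\bigl(1 + (\mathbb{E}\, e^{p(f_K(Z)-\mathbb{E}f_K(Z))} - 1)\bigr) \;=\; \frac{\var(f_K(Z))}{2} + o(1).
\]
Convexity of $\Lambda$ then yields the supporting-line inequality
\[
    \Lambda(p) \;\geq\; \Lambda(0) + p\Lambda'(0) \;=\; \mathbb{E}\, f_K(Z) + \frac{p}{2}\var(f_K(Z))
\]
for every $p \geq 0$ in the domain. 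Specializing to $p = 1$ gives $\log W(K) = \Lambda(1) \geq \mathbb{E}\, f_K(Z) + \tfrac{1}{2}\var(f_K(Z))$, which is precisely the stated inequality.

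The argument is the mirror image of \eqref{eq: MGF bound under convexity}: on the negative axis convexity of $\Lambda$ produced an upper bound on the moment generating function used to derive sub-Gaussian deviations, while on the positive axis the same convexity produces the reversed inequality exploited here. There is no substantial obstacle once Corollary \ref{cor: convex function of Gaussian} is in hand; the only technical point is the computation of $\Lambda'(0)$, but this is exactly the input already dispatched in Section \ref{sec: deviation bounds and characterization}, so the proof reduces to a one-line specialization at $p = 1$.
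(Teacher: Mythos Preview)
Your proposal is correct and is essentially the same argument as the paper's: both recognize $\log W(K)=\Lambda(1)$, invoke the convexity of $\Lambda$ for the convex function $f_K$, and use the values $\Lambda(0)=\mathbb{E}f_K(Z)$, $\Lambda'(0)=\tfrac12\var(f_K(Z))$ together with the tangent/secant inequality at $p=1$. The only cosmetic difference is that the paper first centers and works with $X=f_K(Z)-\mathbb{E}f_K(Z)$ before writing $\Lambda_X(1)\ge\Lambda_X'(0)$, whereas you keep the non-centered $\Lambda$; these are equivalent formulations.
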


\begin{proof}
    Taking $X = f_K(Z) - \mathbb{E}f_K(Z)$, by Theorem \ref{thm: main generalized} the map $\Lambda_X(\lambda) = \frac 1 \lambda \log \mathbb{E} e^{\lambda X}$ is convex, and satisfies $\Lambda_X(0) = 0$ and $\Lambda_X'(0) = \frac{\var(f_K(Z))}{2}$.  Thus,
    \[
        \Lambda_X(1) = \frac{\Lambda_X(1) - \Lambda(0)}{1- 0} \geq \Lambda_X'(0) = \frac{\var(f_K(Z))}{2}
    \]
    Observing that 
    \[
        \Lambda_X(1) = \log \mathbb{E}e^{f_K(Z) - \mathbb{E}f_K(Z)} = \log W(K) - \mathbb{E} f_K(Z),
    \]
    completes the proof.
\end{proof}
As a corollary we obtain Theorem \ref{thm: intro reverse mcmullen}.
\begin{coro}
    For $K \subseteq r B$, where $B$ is the Euclidean  unit ball,
    \[
        \log W(K) \geq \frac{\var( f_K(Z))}{2} +V_1(K) - \frac{r^2}{2}.
    \]
\end{coro}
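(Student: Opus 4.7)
The plan is to deduce the corollary directly from the preceding theorem using the identification of $V_1(K)$ given in Proposition \ref{prop: V1 Gaussian}. The theorem of this section already supplies
\[ \log W(K) \geq \frac{\var(f_K(Z))}{2} + \mathbb{E}f_K(Z), \]
so it suffices to produce the lower bound $\mathbb{E}f_K(Z) \geq V_1(K) - r^2/2$; once that is in hand, substitution finishes the proof.

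For the lower bound on $\mathbb{E}f_K(Z)$, I would decouple the linear and quadratic parts of the supremum defining $f_K$ by the elementary pointwise inequality
\[ f_K(z) = \sup_{y \in \sqrt{2\pi}K}\left(\langle z, y\rangle - \frac{|y|^2}{2}\right) \geq \sup_{y \in \sqrt{2\pi}K}\langle z, y\rangle - \sup_{y \in \sqrt{2\pi}K}\frac{|y|^2}{2}. \]
The hypothesis $K \subseteq rB$ means that every $y \in \sqrt{2\pi}K$ has $|y| \leq \sqrt{2\pi}\,r$, which gives a uniform bound on the quadratic penalty in terms of $r$ alone, independent of $z$. The remaining supremum is the support function of $\sqrt{2\pi}K$, and its expectation against $Z \sim N(0, I_n)$ is precisely $V_1(K)$ by Proposition \ref{prop: V1 Gaussian}. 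Integrating the pointwise inequality and combining these two facts produces the required lower bound on $\mathbb{E}f_K(Z)$.

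No real obstacle is anticipated. The argument amounts to a supremum-decoupling step followed by two earlier results: the convexity-based moment inequality that furnishes the theorem of this section and the Gaussian integral representation of $V_1$. If anything requires care, it is the bookkeeping of the $\sqrt{2\pi}$ normalizations in the definitions of $f_K$ and $V_1(K)$, ensuring that the constant arising from the diameter bound on $\sqrt{2\pi}K$ is correctly tracked through to the final $-r^2/2$ term.
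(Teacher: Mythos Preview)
Your proposal is exactly the paper's approach: invoke the preceding theorem and then bound $\mathbb{E}f_K(Z)$ from below by decoupling the linear supremum (identified as $V_1(K)$ via Proposition~\ref{prop: V1 Gaussian}) from the quadratic penalty, which is controlled using $K\subseteq rB$. Your flagged concern about the $\sqrt{2\pi}$ bookkeeping is well founded: the direct estimate yields $\sup_{y\in\sqrt{2\pi}K}|y|^2/2\le \pi r^2$ rather than $r^2/2$, and the paper's one-line proof makes the identical move without further comment, so the discrepancy you anticipate is present there as well.
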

Thus, by the non-negativity of the variance, we recover a strengthening of \cite[Theorem 1.2]{alonso2021further} where the inequality
\[
     \log W(K) \geq V_1(K) - \frac{r^2}{2}
\]
is obtained.  We direct the reader to \cite{mourtada2025universal} for recent applications of this inequality.

\begin{proof}
    Since $y\in K$ implies $|y| \leq r$, applying Proposition \ref{prop: V1 Gaussian},
    \[
        \mathbb{E} f_K(Z)  = \mathbb{E} \sup_{y \in \sqrt{2 \pi} K} \left( \langle Z, y \rangle - \frac{|y|^2}{2} \right) \geq \mathbb{E} \sup_{y\in \sqrt{2 \pi} K} \langle Z, y\rangle - \frac{r^2}{2} = V_1(K) - \frac{r^2}{2},
    \]
    completes the proof.
\end{proof}
%  \subsection{Wills Functional}

% I think we should be able to say something about this paper \cite{vitale1996wills}.  We would want to recover the exponential inequality (or something analogous) without using convex geometry.  {\color{red} We should  hope to obtain the deviation inequality at the bottom of the paper that is said to be "inaccessible with the methods"}

\appendix

\section{An Algebraic Lemma}
\begin{lemma}\label{lem: exponential lin ind}
Given $p_0, \dots, p_n \in \mathbb{R}$ with $p_i < p_{i+1}$ and $n+1$ distinct points $t_0, \dots, t_n$, the vectors $a_i = \begin{pmatrix}
e^{p_i t_0} \\
\vdots \\
e^{p_i t_n}
\end{pmatrix}$ are linearly independent.  Equivalently, any function of the form 
\[
    \Psi_c(x) =  \sum_{i=0}^n c_i e^{p_i x}
\]
for $c = (c_i) \neq 0$, has at most $n$ distinct zeros.
\end{lemma}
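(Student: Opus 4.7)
The plan is to prove the equivalent statement, namely that a nontrivial combination
\[
    \Psi_c(x) = \sum_{i=0}^n c_i e^{p_i x}, \qquad c = (c_0,\dots,c_n) \neq 0,
\]
has at most $n$ distinct real zeros. The equivalence with the vector formulation is immediate: if the vectors $a_0,\dots,a_n$ were linearly dependent, a nontrivial relation $\sum c_i a_i = 0$ would exhibit $n+1$ distinct zeros $t_0,\dots,t_n$ of $\Psi_c$, contradicting the zero-count bound. Conversely, given $n+1$ zeros, choosing them as the evaluation points produces a nontrivial linear dependence among the $a_i$'s.

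To bound the number of zeros of $\Psi_c$, I would proceed by induction on $n$, using Rolle's theorem as the driving mechanism. The base case $n=0$ is trivial: a single nonzero exponential $c_0 e^{p_0 x}$ never vanishes. For the inductive step, assume the claim for any $n$-term combination and consider $\Psi_c$ with $n+1$ terms and $c \neq 0$. The key simplification is to factor out $e^{p_0 x}$, which does not alter the zero set, to obtain
\[
    \tilde\Psi(x) \coloneqq e^{-p_0 x} \Psi_c(x) = c_0 + \sum_{i=1}^n c_i e^{(p_i - p_0) x}.
\]
Differentiating gives
\[
    \tilde\Psi'(x) = \sum_{i=1}^n c_i (p_i - p_0) e^{(p_i - p_0) x},
\]
which is either identically zero (only when $c_1=\dots=c_n=0$, in which case $\tilde\Psi = c_0 \neq 0$ has no zeros at all) or a nontrivial combination of $n$ exponentials with strictly increasing exponents $0 < p_1 - p_0 < \dots < p_n - p_0$. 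By the inductive hypothesis applied to $\tilde\Psi'$, it has at most $n-1$ zeros. Rolle's theorem then forces $\tilde\Psi$, and hence $\Psi_c$, to have at most $n$ zeros, completing the induction.

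The proof is short and no step is really an obstacle; the only subtlety worth flagging is the degenerate branch of the induction where $c_0 \neq 0$ but $c_1=\dots=c_n=0$, which must be handled separately since the inductive hypothesis does not directly apply to the identically zero derivative. Apart from this minor bookkeeping, the argument is the classical Rolle-plus-induction proof of linear independence of exponentials with distinct frequencies.
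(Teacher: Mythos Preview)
Your argument is correct and follows essentially the same route as the paper: both establish the equivalence of the two formulations, then prove the zero-count bound by induction on $n$, factoring out $e^{p_0 x}$, differentiating, and invoking Rolle together with the inductive hypothesis. The only difference is that you explicitly flag the degenerate branch $c_1=\dots=c_n=0$, which the paper leaves implicit.
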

\begin{proof}
To see the equivalence, if there exists $c \neq 0$ such that $\Phi_c$ has more than $n$ zeros, say $t_1, \dots,t_{n+1}$, Then $\sum_{i=1}^n c_i a_i = 0$, and the $a_i$ are linearly dependent.  Conversely, if $t_0, \dots, t_n$ are distinct points such that the vectors $a_i$ linearly dependent then there exists $c$ such that $\sum_{i=0}^n c_i a_i = 0$, using this $c$ to define $\Phi_c$, we have a sum of $n+1$ exponentials with $t_0, \dots, t_n$ as zeros.
    We now proceed by induction to show that a linear combination of $n+1$ exponential functions has no more than $n$ zeros. When $n=0$, $e^{p_0 x}$ has no zeros and the claim holds. Suppose the claim for $n - 1 \in \mathbb{N}$ and that $\Phi_c(x) = \sum_{i=0}^n c_i e^{p_i x}$ has distinct zeros $t_0, \dots, t_{m-1}$. Multiplying by $e^{-p_0 x}$, the function 
    $$\tilde{\Psi}_c(x) = e^{-p_0 x} \Psi_c(x) =  \sum_{i = 0}^n c_i e^{(p_i - p_0) t_k} $$ satisfies $\tilde{\Psi}_c(t_i) = 0$ and its derivative
    \[
        \tilde{\Psi}_c'(x) = \sum_{i =1}^n c_i (p_i - p_0) e^{p_i x}
    \]
    has, by Rolle's theorem, a  zero on every interval $(t_i, t_{i+1})$.  Thus $\tilde{\Psi}'$ has at least $m-1$ zeros, but by induction hypothesis, since $\tilde{\Psi}'$ is a non-trivial linear combination of $n$ distinct exponential functions, it has no more than $n-1$ zero.  Thus $m -1 \leq n-1$ and the result follows. 
\end{proof}

\begin{lemma}\label{lem: exponential algebra stuff}
    For $p_0 < \dots < p_n$ and $c = (c_0, \dots, c_n) \neq \mathbf{0}$, if the function
    \[
    \Psi_c (x) = \sum_{i = 0}^n c_i e^{p_i x}
    \]
    has exactly $n$ zeros $t_0, \dots, t_{n-1}$, then $c_i c_{i+1} < 0$ and there exists an $\varepsilon > 0$ such that $s \in (0,\varepsilon)$ implies $\Psi_c (t_i + s) \Psi_c (t_i - s) < 0$.
\end{lemma}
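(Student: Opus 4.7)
The plan is to establish the two conclusions of the lemma separately. The first is the strict sign-change property at each $t_i$, and the second is the alternation $c_i c_{i+1} < 0$. Both will follow from combining Rolle's theorem with the ``at most $n$ distinct zeros'' bound provided by Lemma \ref{lem: exponential lin ind}, applied to the normalized function
\[
    \tilde\Psi(x) \coloneqq e^{-p_0 x}\Psi_c(x) = c_0 + \sum_{i=1}^n c_i e^{(p_i - p_0)x},
\]
which has the same zero set as $\Psi_c$.

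For the strict sign change at each $t_i$, I would argue by contradiction. If $\Psi_c$ fails to change sign at some $t_j$, then $t_j$ is a local extremum of the smooth function $\Psi_c$, so $\Psi_c'(t_j) = 0$; combined with $\Psi_c(t_j) = 0$ this yields $\tilde\Psi'(t_j) = 0$. On the other hand, $\tilde\Psi'(x) = \sum_{i=1}^n c_i(p_i - p_0)e^{(p_i - p_0)x}$ is a sum of $n$ exponentials with distinct exponents, so by Lemma \ref{lem: exponential lin ind} it has at most $n-1$ distinct zeros. But Rolle's theorem applied to $\tilde\Psi$ on each interval $[t_i, t_{i+1}]$, $i = 0, \ldots, n-2$, produces $n-1$ zeros of $\tilde\Psi'$ lying in the open intervals, hence distinct from $t_j$; including $t_j$ then produces $n$ distinct zeros, a contradiction.

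For the alternation, I would induct on $n$. The base cases are immediate: $n = 0$ is vacuous, and $n = 1$ follows from $c_0 e^{p_0 x} + c_1 e^{p_1 x} = 0 \iff -c_1/c_0 = e^{(p_1 - p_0)x}$, which admits a real solution precisely when $c_0 c_1 < 0$. For the inductive step, Rolle applied to $\tilde\Psi$ over the intervals $[t_i, t_{i+1}]$ produces at least $n-1$ zeros of $\tilde\Psi'$, and Lemma \ref{lem: exponential lin ind} gives at most $n-1$, so exactly $n-1$. Applying the inductive hypothesis to $\tilde\Psi'$, which is a sum of $n$ exponentials with $n-1$ zeros, and observing that multiplication by the positive factor $p_i - p_0$ preserves signs, I obtain $c_i c_{i+1} < 0$ for $i = 1, \ldots, n-1$. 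Symmetrically, repeating the entire construction with $\hat\Psi(x) \coloneqq e^{-p_n x}\Psi_c(x)$ in place of $\tilde\Psi$ supplies the missing pairs $c_i c_{i+1} < 0$ for $i = 0, \ldots, n-2$; together these cover every $i \in \{0, \ldots, n-1\}$.

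The main obstacle is the careful bookkeeping of distinct zeros in the contradiction argument: the Rolle zeros must be confirmed distinct from $t_j$, which is automatic since Rolle places them strictly inside the open intervals $(t_i, t_{i+1})$, not at any $t_k$. A lesser but essential subtlety is that the induction on alternation must be run once after factoring out $e^{-p_0 x}$ and once more after factoring out $e^{-p_n x}$, since a single one-sided Rolle argument leaves one extreme pair ($c_0 c_1$ or $c_{n-1} c_n$) uncontrolled.
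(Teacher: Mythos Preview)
Your proposal is correct and matches the paper's approach: both arguments normalize by $e^{-p_0 x}$ (and symmetrically by $e^{-p_n x}$), use Rolle plus Lemma \ref{lem: exponential lin ind} to pin down exactly $n-1$ zeros of the derivative, and then induct to get $c_i c_{i+1} < 0$ on all adjacent pairs. The only organizational difference is that you handle the sign change at each $t_i$ by a separate contradiction argument (an extra zero of $\tilde\Psi'$ at $t_j$), whereas the paper deduces it from the interlacing of the $t_i$ with the zeros of $\tilde\Psi'$, which forces $\tilde\Psi$ to be strictly monotone near each $t_i$; both routes are equivalent and rest on the same zero count.
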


\begin{proof}
     We proceed again by induction on $n$. For $n=0$, $x \mapsto ce^{px}$ has no zeros, so our lemma is true vacuously, the proof when $n=1$, $\Psi_c$ has a unique zero at $t = \frac{\log \left( - \frac{c_0}{c_1} \right)}{p_1-p_0}$ if and only if $c_0 c_1 < 0$ from which the claim follows. Thus we suppose $n \geq 2$ and that our claim is true for $n-1 \in \mathbb{N}$ and that $\Psi_c(x) = \sum_{i=0}^n c_i e^{p_i x}$  has $t_1 < \cdots < t_n$ zeros.  The function
     \[
        \tilde{\Psi}_c(x) = e^{-p_k x} \Psi_c(x) = \sum_{i=0}^n c_i e^{(p_i - p_k)x}
     \]
     has $t_0, \dots, t_{n-1}$ as zeros as well.  Moreover, for $x \notin \{t_0,\dots, t_{n-1}\}$ we have $\frac{\tilde{\Psi}_c}{\Psi_c} > 0$ and taking the a derivative, 
     \[
        \tilde{\Psi}_c'(x) = \sum_{i \neq k } c_i (p_i - p_k) e^{(p_i - p_k)x}
     \]
     is a sum of $n$ exponential functions. By Lemma \ref{lem: exponential lin ind} has at most $n-1$ zeros.  However, by Rolle's theorem $\tilde{\Phi}_c'$ has zeros $s_i \in (t_i, t_{i+1})$ for $0 \leq i \leq n-2$ and hence $\tilde{\Psi}_c'$ has exactly $n-1$ zeros. By the inductive hypothesis we can conclude.  When $k = 0$, we have
     \[
        c_i c_{i+1} (p_i - p_0) (p_{i+1} - p_0) < 0.
     \]
     Since the $p_i$'s are ordered, $(p_i - p_0) (p_{i+1} - p_0) > 0$ and we have $c_i c_{i+1} < 0$ for $i \geq 1$.  Similarly for $k = n$,
     \[
         c_i c_{i+1} (p_i - p_k) (p_{i+1} - p_k)
     \]
    and $c_i c_{i+1} < 0$ for $i \leq n-2$.  The interlacing of the zeros of $\tilde{\Psi}_c'$ and $\tilde{\Psi}_c$ forces $\tilde{\Psi}_c$ and hence $\Psi_c$ as well to switch signs at each $t_i$, completing the proof.
\end{proof}

\section*{Acknowledgements}
The second and third author's work was supported by CONAHCYT grant CBF2023-2024-3907.  We thank Arturo Jaramillo, Mokshay Madiman, and Shuya Yu for helpful discussions.
\bibliographystyle{plain}
\bibliography{bibibi}

\end{document}